\title{Note on Caranti's Method of Construction of Miller groups}
\author{Rahul Dattatraya Kitture, Manoj K. Yadav}
\begin{document}
\maketitle
\begin{abstract}
The non-abelian groups with abelian group of automorphisms are widely studied. Following Earnley, such
groups are called  Miller groups, since the first example of such a group was given by
Miller in 1913. 
Many other examples of Miller $p$-groups have been constructed by several authors. 
Recently, A. Caranti [{\it Israel J. Mathematics {\bf 205} (2015), 235-246}]
provided module theoretic methods for constructing  non-special Miller $p$-groups from special Miller $p$-groups.
By constructing examples, we show that these methods do not always work.    
We  also provide a sufficient condition on special Miller $p$-group for which the methods of Caranti work.
\end{abstract}
{\small {\it Keywords:} $p$-groups, central automorphisms, automorphism group.}

{\small {\bf MSC (2010):} 20D45, 20D15}
\newtheorem{lemma}{Lemma}[section]
\newtheorem{definition}[lemma]{Definition}
\newtheorem{theorem}[lemma]{Theorem}
\newtheorem{example}[lemma]{Example}
\newtheorem{remark}[lemma]{Remark}
\newcommand{\overbar}[1]{\mkern 1.5mu\overline{\mkern-1.5mu#1\mkern-1.5mu}\mkern 1.5mu}
\section{Introduction}\label{sectin1}
In 1908, H. Hilton \cite[Appendix, Q.7]{hilton} posed the following question: 
{\it Can a non-Abelian group have an Abelian group of  automorphisms?}
G. A. Miller (1913) provided positive answer to the question. 
He constructed a non-abelian group of order $64$ 
whose automorphism group is elementary abelian of order $128$ (see \cite{miller}). 
Following \cite{earnley} we call such groups {\it Miller groups}.

After the example by Miller, 
the theory of Miller groups has been developed with various examples.
There are many necessary conditions for a $p$-group to be Miller 
(see \cite{earnley}, \cite{hopkins} and \cite{morigi2}), but no sufficient conditions.
Therefore it is difficult to construct examples of Miller groups. 

Several examples of Miller groups have been constructed by various approaches 
(see \cite{earnley}, \cite{heineken2}, \cite{yadav1}, \cite{yadav2}, \cite{jonah} and \cite{morigi1}). 
Many of these groups are special $p$-groups and are given by simple presentations, but the 
techniques used, to prove that these groups are Miller, are highly computational and considerably difficult.

Concerning these difficulties in the construction of Miller groups, 
A. Caranti \cite[\S 5, \S6]{caranti}, among other things,  provided two methods, termed here {\it Method 1} and {\it Method 2}, to 
construct non-special Miller $p$-group $G$ from a special Miller $p$-group $H$ (see \S 3 for a brief description). 
The methods are interesting as these involve simple and elegant module theoretic 
arguments, instead of cumbersome computations. But, unfortunately there remained a gap in the proof. 
In this paper we attempt to fill up this gap in one direction which was motivated by an observation from the first two theorems, Theorem \ref{theoremA} and \ref{theoremB} below. 

Before stating our main results, we recall a terminology from \cite{gorenstein}. 
According to the methods in \cite{caranti}, given a special Miller $p$-group $H$ and a cyclic $p$-group $\langle z\rangle$ of order $\geq p^2$, a group $G:=H\rtimes_M \langle z\rangle$ is constructed as {\it amalgamated  semi-direct product} of $H$ by $\langle z\rangle$ (amalgamated) over a subgroup $M\leq H'$ of order $p$ (see \cite[p.27]{gorenstein} or \S 2 for the  definition). 
With appropriate action of $z$ on $H$ and some conditions on $H$,  it was claimed in \cite{caranti} that for every choice of $M$ (of order $p$) in $H'$ but not in $H^p$, $G$ is a Miller group. We show that this is not always true. 
\vskip3mm
Let $H=\langle a,b,c,d\rangle$ be a $p$-group of class $2$ with the following additional relations:
\begin{equation*}
a^p=[a,c], \hskip3mm b^p=[a,bcd], \hskip3mm c^p=[b,cd],\hskip3mm d^p=[b,d].
\end{equation*}
The group $H$ is a special Miller $p$-group of order $p^{10}$ (see Lemma \ref{lemma3}). 
Note that $\langle [a,b]\rangle$ and $\langle [a,d]\rangle$ are subgroups of order $p$ in $H'$ but not in $H^p$. 

%
\newtheorem{thm}{Theorem}
\renewcommand\thethm{\Alph{thm}}
\begin{thm} \label{theoremA}
Let $H$ be the special Miller $p$-group as above, $\langle z\rangle$ a  cyclic group of order $p^2$ and 
$M\leq H'$ a subgroup of order $p$ with $M\nsubseteq H^p$.  Let $G_1=H\rtimes_M \langle z\rangle$, where $z$ acts trivially on $H$. 
Then $G_1$ is a non-special $p$-group and the following holds true:
\begin{enumerate}
 \item If $M=\langle [a,b]\rangle$, then $G_1$ is a Miller group.
 \item If $M=\langle [a,d]\rangle$, then $G_1$ is not a Miller group.
\end{enumerate}
\end{thm}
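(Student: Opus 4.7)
My plan rests on the elementary reduction that for any nonabelian finite group $G$, $\mathrm{Aut}(G)$ is abelian if and only if every automorphism of $G$ is central, i.e., acts trivially on $G/Z(G)$. One direction follows from $\phi\iota_g\phi^{-1} = \iota_{\phi(g)}$, which forces $\phi(g)\equiv g \pmod{Z(G)}$ whenever $\phi$ commutes with every inner $\iota_g$, and the other is the standard commutativity of the group $\mathrm{Aut}_c(G)$ of central automorphisms. Before splitting into cases I would record the common structural facts: $|G_1| = p^{11}$, $G_1$ is of class $2$, $Z(G_1) = H'\langle z\rangle$, $G_1' = \Phi(G_1) = H'$, $G_1$ has exponent $p^2$, and $G_1$ is non-special since $z\in Z(G_1)\setminus G_1'$. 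The subgroup $M$ is characteristic in $G_1$, being the set of $p$-th powers of the order-$p^2$ elements of $Z(G_1)$. Consequently, any $\phi\in\mathrm{Aut}(G_1)$ induces a matrix $A\in\mathrm{GL}_4(\mathbb{F}_p)$ on $G_1/Z(G_1)\cong H/H'$ (basis $\bar a,\bar b,\bar c,\bar d$) whose exterior square $\Lambda^2 A$ on $H'\cong\Lambda^2(H/H')$ must preserve $M$, and which must satisfy the four $p$-th power relations of $H$ corrected by the $z$-components of $\phi(a),\phi(b),\phi(c),\phi(d)$ via $z^p = m$.

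For part (1) with $m = [a,b]$, $M$-preservation forces $A$ to stabilize the plane $\langle\bar a,\bar b\rangle$ setwise, so the entries $A_{31}, A_{41}, A_{32}, A_{42}$ vanish. Substituting this block shape into each of the four $p$-th power relations, which live in the $6$-dimensional space $H'$ with basis $\{[a,b],[a,c],[a,d],[b,c],[b,d],[c,d]\}$, produces a system of $24$ scalar equations. I would chase these in the order $a^p, d^p, c^p, b^p$: already the $a^p$ relation forces $A_{21} = 0$, $A_{33} = 1$, $A_{43} = 0$, and the remaining three then cascade to pin down every other entry of $A$. One concludes $A = I$, so every automorphism of $G_1$ is central and $G_1$ is Miller.

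For part (2) with $m = [a,d]$, the same analysis applies except that $M$-preservation now stabilizes $\langle\bar a,\bar d\rangle$ instead, forcing $A_{21} = A_{31} = A_{24} = A_{34} = 0$. The resulting constraints are strictly weaker and admit the one-parameter family $A = I + k\,E_{43}$ for $k\in\mathbb{F}_p$. I would then exhibit the corresponding non-central automorphism
\[
\phi:\quad a\mapsto az,\quad b\mapsto bz,\quad c\mapsto cd,\quad d\mapsto d,\quad z\mapsto z,
\]
and verify directly that the identities $(az)^p = [az, cd]$, $(bz)^p = [az,\,bz\cdot cd\cdot d]$, $(cd)^p = [bz, cd^2]$, $d^p = [bz, d]$, and $z^p = [az, d]$ all hold in $G_1$: each works out because the extra $[a,d]$-factor produced by the shifts $a\mapsto az$, $b\mapsto bz$, $c\mapsto cd$ is absorbed precisely by the new relation $z^p = [a,d]$. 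Since $\phi(c) = cd\not\equiv c\pmod{Z(G_1)}$, this $\phi$ is non-central, so $\mathrm{Aut}(G_1)$ is non-abelian and $G_1$ is not Miller.

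The main obstacle will be the coefficient chase in part (1): $24$ scalar equations in $H'\cong\mathbb{F}_p^6$ must be shown to pin $A$ down to the identity, and one must verify that no combination of the free parameters of $A$ can evade the cascade. The comparison between the two cases also pinpoints the gap in Caranti's Method~1: the coincidence $z^p = [a,d]$ with the appearance of $[a,d]$ in the relation $b^p = [a,b][a,c][a,d]$ is what enables the non-central automorphism above, and this has no analogue when $z^p = [a,b]$.
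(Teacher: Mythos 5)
Your part (2) is essentially identical to the paper's argument: the same map $a\mapsto az$, $b\mapsto bz$, $c\mapsto cd$, $d\mapsto d$, $z\mapsto z$ is exhibited there, your verification of the five relations is correct, and a single non-central automorphism does immediately make $\mathrm{Aut}(G_1)$ non-abelian. For part (1) you take a genuinely different route: the paper reduces to showing that $H/\langle[a,b]\rangle$ is a special Miller group and then invokes Theorem \ref{theoremC} (via $G_1/M\cong H/M\times\langle\bar z\rangle$ and the Bidwell--Curran--McCaughan description of automorphisms of direct products), whereas you work directly in $G_1$, using that $M=Z(G_1)^p$ is characteristic, that the induced map $\Lambda^2A$ on $G_1'\cong\Lambda^2(H/H')$ must fix the line $M$, hence that $A$ stabilizes the plane $\langle\bar a,\bar b\rangle$, and then chasing the four $p$-th power relations. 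Your first deductions from the $a^p$ relation ($A_{21}=0$, $A_{33}=1$, $A_{43}=0$) do check out, but the remaining cascade to $A=I$ is announced rather than carried out; as written this portion is a plan, not a proof.

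The genuine gap is in your opening reduction. The implication ``$\mathrm{Aut}(G)$ abelian $\Rightarrow$ every automorphism is central'' is correct, but the converse is not the ``standard commutativity of $\mathrm{Autcent}(G)$'': $\mathrm{Autcent}(G)$ is the centralizer of $\mathrm{Inn}(G)$ in $\mathrm{Aut}(G)$ and is non-abelian in general (for $G=S_3\times C_p\times C_p$ it contains a copy of $\mathrm{GL}_2(p)$ acting on the central factor). The issue is live here precisely because $G_1$ is non-special: $Z(G_1)=H'\langle z\rangle$ strictly contains $G_1'=H'$, so a central automorphism $\varphi$ is a priori only trivial on $G_1/Z(G_1)$, and Lemma \ref{lem4} does not apply with $N=Z(G_1)$. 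To close the gap you must show that $\varphi$ is in fact trivial modulo $G_1'$: writing $\varphi(x)=xf(x)$ with $f\in\mathrm{Hom}(G_1/G_1',Z(G_1))$ and $f(x)=z^{j}h$, $h\in H'$, the compatibility $\varphi(x)^p=\varphi(x^p)=x^p$ forces $m^{j}=1$, i.e.\ $f(x)\in G_1'$ for every $x$; then $\varphi$ is trivial on both $G_1/G_1'$ and $G_1'$ and Lemma \ref{lem4} gives that $\mathrm{Autcent}(G_1)$ is (elementary) abelian. This is exactly the extra step the paper performs in Claim 3 of the proof of Theorem \ref{theoremC}, following Caranti's argument after $\alpha=1$ is established. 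Without it, even a completed proof that $A=I$ does not yet yield that $G_1$ is Miller.
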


\begin{thm} \label{theoremB}
Let $H$ be the special Miller $p$-group as above, $\langle w\rangle$  a  cyclic group of order $p^3$ and 
$M\leq H'$ a subgroup of order $p$ with $M\nsubseteq H^p$. Let $G_2=H\rtimes_M \langle w\rangle$, 
where $w$ normalizes $H$ via the following non-inner central automorphism of $H$: 
\begin{equation*}
waw^{-1}= ad^p, \hskip3mm wbw^{-1}= b,\hskip3mm  wcw^{-1}= c,\hskip3mm wdw^{-1}=d.
\end{equation*}
Then $G_2$ is a non-special $p$-group of order $p^{12}$ and the following holds true:
\begin{enumerate}
 \item If $M=\langle [a,b]\rangle$, then $G_2$ is a Miller group.
 \item If $M=\langle [a,d]\rangle$, then $G_2$ is not a Miller group.
\end{enumerate}
\end{thm}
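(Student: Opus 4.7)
My plan is to first verify the basic structural properties of $G_2$, then study $\mathrm{Aut}(G_2)$ via its restriction to the subgroup $H$. The assignment $a\mapsto ad^p$, $b\mapsto b$, $c\mapsto c$, $d\mapsto d$ defines a central automorphism of $H$ of order $p$: it is central because $d^p=[b,d]\in H'=Z(H)$, and its $p$-th iterate sends $a\mapsto ad^{p^2}=a$ since $d$ has order $p^2$ in $H$. Hence $w^p$ centralizes $H$ in $G_2$, so $w^p\in Z(G_2)$. Using $[w,a]=d^p$, $[w,b]=[w,c]=[w,d]=1$, and the fact that $w$ fixes $H'$ pointwise (inserting a central $d^p$ into a commutator leaves it unchanged), I obtain $G_2$ of class $2$ with $[G_2,G_2]=H'$ of order $p^6$. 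A direct centralizer computation then yields $Z(G_2)=H'\langle w^p\rangle$ of order $p^7$, so $Z(G_2)\ne [G_2,G_2]$ and $G_2$ is non-special; the order $p^{12}$ is immediate from the amalgamation formula $|G_2|=|H|\cdot|\langle w\rangle|/|M|$.

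Next I show that $H$ is characteristic in $G_2$, so that any $\phi\in\mathrm{Aut}(G_2)$ restricts to $\phi|_H\in\mathrm{Aut}(H)$. Since $H$ is a special Miller $p$-group by Lemma \ref{lemma3}, $\phi|_H$ is a central automorphism, and I may write $\phi(a)=a\alpha$, $\phi(b)=b\beta$, $\phi(c)=c\gamma$, $\phi(d)=d\delta$ with $\alpha,\beta,\gamma,\delta\in H'$. Letting $\phi(w)=h_\phi w^{t}$ with $h_\phi\in H$ and $\gcd(t,p)=1$, the compatibility relations
\begin{equation*}
\phi(w)\phi(a)\phi(w)^{-1}=\phi(a)\phi(d)^p,\qquad \phi(w)\phi(x)\phi(w)^{-1}=\phi(x)\ (x\in\{b,c,d\}),\qquad \phi(w)^{p^2}=\phi(m)
\end{equation*}
translate into explicit constraints on the parameters $\alpha,\beta,\gamma,\delta,h_\phi,t$, yielding a complete description of $\mathrm{Aut}(G_2)$.

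For Part (1), with $m=[a,b]$, $\phi|_H$ fixes $H'$ pointwise, so $\phi(m)=[a,b]$ and the amalgamation relation forces $t\equiv 1\pmod p$. The remaining parameters are constrained rigidly enough that for any two admissible $\phi,\psi$ the commutator $[\phi,\psi]$ acts trivially on each of $a,b,c,d,w$; this is the abelianness. The decisive point is that $\langle[a,b]\rangle$ lies transverse to $\langle d^p\rangle=\langle[b,d]\rangle$ in $H'/H^p$, so the amalgamation decouples from the $w$-twist $d^p$. For Part (2), with $m=[a,d]$, the amalgamation couples with $d^p=[b,d]$ (both "live in the $d$-direction" of $H'$), creating extra freedom. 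I exhibit explicit non-commuting automorphisms: for instance $\phi$ fixing $a,b,c,d$ and sending $w\mapsto wh$ for a suitable $h\in H$, and $\psi$ extending the central automorphism $a\mapsto a[a,d]$, $b\mapsto b$, $c\mapsto c$, $d\mapsto d$ of $H$ to $G_2$ via a forced image of $w$; evaluating $[\phi,\psi]$ on $a$ or $w$ yields a nontrivial element.

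The main obstacle I anticipate is twofold: first, proving cleanly that $H$ is characteristic (following the style of \cite{caranti} via the distinctive interplay between $p$-th powers and the center), and second, the careful commutator bookkeeping needed to solve the compatibility equations and to distinguish Parts (1) and (2). The subtle separation between the two cases is ultimately linear-algebraic, governed by the position of $M$ inside $H'/H^p$ relative to $\langle[b,d]\rangle$, which controls whether the amalgamation couples non-trivially with the $w$-action and hence whether extra, non-commuting automorphisms of $G_2$ can appear.
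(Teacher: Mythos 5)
Your structural verification of $G_2$ (order $p^{12}$, $G_2'=H'$, $Z(G_2)=H'\langle w^p\rangle$, non-special) is fine, and for part (2) the idea of exhibiting explicit automorphisms is the right one; the paper does exactly this, except that it only needs \emph{one} explicit non-central automorphism (namely $a\mapsto aw^p$, $b\mapsto bw^p$, $c\mapsto cd$, $d\mapsto d$, $w\mapsto w$), since in a Miller group every automorphism must be central. However, your framework for analysing $\mathrm{Aut}(G_2)$ has a fatal flaw: $H$ is \emph{not} characteristic in $G_2$. The very automorphism that witnesses part (2) sends $a$ to $aw^p$, and $w^p\notin H$ because $H\cap\langle w\rangle=\langle w^{p^2}\rangle$; so your plan to restrict every $\phi\in\mathrm{Aut}(G_2)$ to $H$ and write $\phi(a)=a\alpha$ with $\alpha\in H'$ cannot be carried out, and the ``compatibility relations'' you set up do not describe all of $\mathrm{Aut}(G_2)$. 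The paper instead works with the genuinely characteristic subgroup $\Omega_2(G_2)=H\langle w^p\rangle$ and reduces part (1) to Theorem~\ref{theoremD}, whose proof passes through the quotient $G/M$ and the hypothesis that $H/M$ is a special Miller group.

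Your proposed separating invariant is also wrong. You claim the dichotomy is governed by the position of $M$ in $H'/H^p$ relative to $\langle[b,d]\rangle$, but $[b,d]=d^p\in H^p$ is trivial in $H'/H^p$, and since $b^p=[a,bcd]=[a,b][a,c][a,d]$ one has $[a,b]\equiv[a,d]^{-1}\pmod{H^p}$, so $\langle[a,b]\rangle$ and $\langle[a,d]\rangle$ generate the \emph{same} line of $H'/H^p$. No invariant computed in $H'/H^p$ can distinguish the two cases. The correct criterion, which is the whole point of Theorems~\ref{theoremC} and~\ref{theoremD}, is whether the quotient $H/M$ is again a special Miller $p$-group: it is for $M=\langle[a,b]\rangle$ (proved in Theorem~\ref{theoremA}(1)) and the argument breaks for $M=\langle[a,d]\rangle$. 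Finally, your part (1) is only a sketch (``constrained rigidly enough''), with no actual computation; as it stands neither half of the theorem is proved.
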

Note that, although part (2) of the above theorems provides counter-examples for the two methods, part (1) motivates to find a condition on the choice of $M$ which would imply that $G$ is a Miller group. 
We provide a sufficient condition on the choice of $M$ for which the methods work. This condition is stated in the following theorems, for which we set some common hypotheses.

{\bf (i)} Let $H$ be a special Miller $p$-group such that $H^p<H'$ and the map $H/H'\rightarrow H^p$, $hH'\mapsto h^p$ is injective. 

{\bf (ii)}  Let $M$ be a subgroup of order $p$ in $H'$ but not in $H^p$.
\begin{thm}\label{theoremC}
With $H$ and $M$ as in (i) and (ii) above, let $G_1=H\rtimes_M \langle z\rangle$, where $z$ acts trivially on $H$,  $o(z)=p^2$.  If $H/M$ is a special Miller $p$-group, then 
$G_1$ is a (non-special) Miller group.
\end{thm}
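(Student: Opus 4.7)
The aim is to show $\mathrm{Aut}(G_1)$ is abelian, which I split into (I) every automorphism of $G_1$ is central (acts trivially on $G_1/Z(G_1)$) and (II) $\mathrm{Autcent}(G_1)$ is abelian. Step (II) is routine; (I) is the crux.

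First I would carry out the basic structural computations. Since $z$ is central and $\langle z^p\rangle = M \leq H' = Z(H)$, one verifies $G_1' = H'$, $\Phi(G_1) = H'\cdot G_1^p = H'$ (because $G_1^p = H^p M \leq H'$), $Z(G_1) = H'\langle z\rangle$, and $\Omega_1(Z(G_1)) = H'$; in particular $\Phi(G_1)\subsetneq Z(G_1)$, so $G_1$ is non-special. A direct computation gives $Z(G_1)^p = \langle z^p\rangle = M$, so $M$ is characteristic in $G_1$ and every $\phi \in \mathrm{Aut}(G_1)$ descends to $\bar\phi \in \mathrm{Aut}(G_1/M)$. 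Since $H \cap \langle z\rangle = M$ and $\langle z\rangle$ is central, $G_1/M$ is the internal direct product $K\times L$ with $K = H/M$ (special Miller by hypothesis) and $L = \langle zM\rangle \cong C_p$. Using the specialness of $K$ one further checks $Z(G_1/M) = Z(G_1)/M = Z(K)\times L$, giving a canonical identification $(G_1/M)/Z(G_1/M) = G_1/Z(G_1) = H/H'$.

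For (II), the standard correspondence $\phi \leftrightarrow (gH' \mapsto g^{-1}\phi(g))$ identifies $\mathrm{Autcent}(G_1)$ with $\mathrm{Hom}(G_1/H', H')$; each such homomorphism takes values in $\Omega_1(Z(G_1)) = H'$ because $G_1/H'$ has exponent $p$. Each central automorphism is then the identity on $H'$, so composition translates to addition of homomorphisms, which is commutative.

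For (I), given $\phi$ and its reduction $\bar\phi$, the heart of the argument is to show that $\bar\phi(K)$ does not contain $L$, whence $\bar\phi(K) = K_\beta := \{k\cdot(zM)^{-\beta(k)} : k\in K\}$ for some $\beta \in \mathrm{Hom}(K/K',L)$. If instead $L \leq \bar\phi(K)$, then the centrality of $L$ together with $L\cap K = 1$ forces $\bar\phi(K) = (\bar\phi(K)\cap K)\times L$, and the isomorphism $K \cong \bar\phi(K)$ would display $K$ as a direct product with a $C_p$ factor; but no special $p$-group admits such a factor, since in any direct decomposition $K = A\times B$, specialness forces $Z(B) = B'$, which is impossible for $B = C_p$. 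Granting the claim, the projection $\pi_\beta:K_\beta\to K$ along $L$ is an isomorphism, so $\phi_0 := \pi_\beta\circ\bar\phi|_K$ is an automorphism of $K$, hence acts trivially on $K/Z(K)$ since $K$ is Miller. A short comparison shows that $\bar\phi$ induces the same action as $\phi_0$ on $(G_1/M)/Z(G_1/M)$, which under the identification above is exactly the action of $\phi$ on $G_1/Z(G_1)$; hence $\phi$ is central. Together with (II), this shows $\mathrm{Aut}(G_1)$ is abelian. The main obstacle is precisely the ``no $C_p$ direct factor'' step, which is where the specialness hypothesis on $K = H/M$ does the essential work.
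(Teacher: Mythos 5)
Your proposal is correct and follows essentially the same route as the paper: $M=Z(G_1)^p$ is characteristic, $G_1/M\cong (H/M)\times C_p$, and the Miller property of $H/M$ forces every induced automorphism of $G_1/M$ to be central, which pulls back to $G_1$ via $Z(G_1/M)=Z(G_1)/M$. The only differences are cosmetic: where the paper cites Bidwell--Curran--McCaughan for the component structure of automorphisms of the direct product (via ``no common direct factor'') and defers to Caranti for the final abelianness of $\mathrm{Aut}(G_1)$, you prove the needed special cases directly (a special group has no $C_p$ direct factor; central automorphisms compose additively because $G_1/G_1'$ has exponent $p$ and they fix $G_1'$).
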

Let $H$ also satisfy the following condition: 

{\bf (iii)} $H'$ is freely generated by $[x_i,x_j]$, $1\leq i<j\leq n$, provided $\{ x_1,\ldots,x_n\}$ is a minimal generating set for $H$.
\begin{thm} \label{theoremD}
With $H$ and $M$ as in (i) to (iii) above, let $G_2=H\rtimes_M \langle w\rangle$, where  $o(w)>p^2$ and $w$ acts on $H$ via a non-inner central automorphism of $H$. 
If $H/M$ is a special Miller $p$-group, then $G_2$ is (non-special) Miller group.
\end{thm}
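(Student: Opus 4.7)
The plan is to adapt Caranti's Method 2 by using the hypothesis that $H/M$ is a special Miller $p$-group to supply the missing ingredient revealed by Theorem \ref{theoremB}(2). Let $o(w)=p^n$ with $n\ge 3$; the amalgamation identifies $w^{p^{n-1}}$ with a generator of $M\le H'$.

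First I would verify that $G_2$ is non-special. Because $w$ acts on $H$ as a central automorphism, $[w,H]\subseteq Z(H)=H'$, and $w$ acts trivially on $H'$ since $[h_1^w,h_2^w]=[h_1,h_2]$ whenever $h_i^w\equiv h_i\pmod{Z(H)}$. Hence $G_2'=H'$ and $H'\subseteq Z(G_2)$. In a special $p$-group of class $2$ the derived subgroup is elementary abelian, so $[w^p,h]=[w,h]^p=1$ and $w^p\in Z(G_2)$. Since $w^p$ has order $p^{n-1}\ge p^2$, which exceeds the exponent of $G_2'=H'$, we obtain $Z(G_2)\ne G_2'$ and $G_2$ is non-special.

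For the Miller property, I would show $\mathrm{Aut}(G_2)$ is abelian in two stages. The first stage is to prove that $H$ is characteristic in $G_2$, so that each $\alpha\in\mathrm{Aut}(G_2)$ restricts to $\alpha|_H\in\mathrm{Aut}(H)$ and sends $w$ to $h_0w^j$ with $\gcd(j,p)=1$. This relies on hypotheses (i) and (iii): (iii) gives a canonical $\mathbb{F}_p$-basis of $H'=\Phi(H)$ by the commutators $[x_i,x_j]$, while (i) yields an injection $H/H'\hookrightarrow H^p\subseteq H'$. Together they allow $H$ to be recovered inside $G_2$ as the preimage, modulo $H'$, of the submodule of $G_2/H'$ spanned by $p$-th powers lying in $H'$; the element $w$ is excluded because its image in $G_2/H'$ has order $p^{n-1}\ge p^2$, whereas $h^p\in H^p\subseteq H'$ for every $h\in H$. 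In the second stage I pass to the quotient $G_2/M$, where the amalgamation collapses to yield the ordinary semidirect product $(H/M)\rtimes\langle\bar w\rangle$ with $\bar w$ of order $p^{n-1}$ acting by a central automorphism. Because $H/M$ is a special Miller $p$-group, the module-theoretic arguments of \cite{caranti} (now supplied with the hypothesis that justifies them) conclude that $\mathrm{Aut}(G_2/M)$ is abelian. Consequently, any $[\alpha,\beta]$ for $\alpha,\beta\in\mathrm{Aut}(G_2)$ induces the identity on $G_2/M$; a short computation using that $M$ is central of order $p$ and that the induced map on $M$ is trivial then yields $[\alpha,\beta]=1$.

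The main obstacle is the first stage, namely establishing that $H$ is characteristic in $G_2$. Theorem \ref{theoremB}(2) shows the method can fail when hypothesis (i) is violated, because then an automorphism can substitute $w\mapsto wh^p$ for a suitable $h$ and disturb $H$; ruling out such behaviour for arbitrary $\alpha\in\mathrm{Aut}(G_2)$ will require careful module-theoretic bookkeeping on $H/H'$ versus $H^p$ inside $G_2/H'$, and is where I expect the bulk of the technical effort to lie.
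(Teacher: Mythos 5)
There are genuine gaps in both stages of your plan, and the paper's proof takes a different (and cleaner) route. In Stage 1 you try to show that $H$ is characteristic in $G_2$ by recovering it as the set of elements whose $p$-th powers fall into $H'$. This does not recover $H$: writing $o(w)=p^{m+1}$ and $z=w^{p^{m-1}}$ (central of order $p^2$, with $\langle z^p\rangle=M$), one has $(hz)^p=h^pz^p\in H'$ for every $h\in H$ because $G_2$ has class $2$, $H'$ is elementary abelian and $p$ is odd; so your test set is $H\langle z\rangle=\Omega_2(G_2)$, which strictly contains $H$. Whether $H$ itself is characteristic is exactly the issue you flag as ``the bulk of the technical effort,'' and you do not resolve it. The paper sidesteps it entirely: it works with $G_1:=H\langle z\rangle=\Omega_2(G_2)$, which \emph{is} characteristic, observes that $G_1=H\times_M\langle z\rangle$ satisfies the hypotheses of Theorem \ref{theoremC} (this is where ``$H/M$ is a special Miller group'' enters), hence $G_1$ is Miller, and deduces that every $\varphi\in\mathrm{Aut}(G_2)$ is the identity on $\langle \bar{x}_1,\ldots,\bar{x}_n\rangle\le G_2/Z(G_2)$, i.e.\ the block $\alpha$ in the matrix of $\varphi$ equals $I_n$; from there Caranti's computation goes through.

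Stage 2 is also not sound as stated. The assertion that ``the module-theoretic arguments of \cite{caranti}'' show $\mathrm{Aut}(G_2/M)$ is abelian is unsupported: those arguments concern the amalgamated product $G_2$ itself, and the whole point of this paper is that they contain a gap; nothing in your outline supplies a proof that $(H/M)\rtimes\langle\bar w\rangle$ (with $\bar w$ of order $p^m\ge p^2$ acting by the induced central automorphism) has abelian automorphism group. Moreover, even if $\mathrm{Aut}(G_2/M)$ were abelian, your concluding step fails quantitatively: the automorphisms of $G_2$ inducing the identity on $G_2/M$ are exactly the maps $g\mapsto g\mu(g)$ with $\mu\in\mathrm{Hom}(G_2/\Phi(G_2),M)$ (they are automatically trivial on $M$ since $M\le\Phi(G_2)$), and this kernel is elementary abelian of order $p^{n+1}\neq 1$. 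Knowing $[\alpha,\beta]$ lies in it yields only that $\mathrm{Aut}(G_2)$ is metabelian, not abelian; no ``short computation'' closes that gap without first proving, as the paper does, that every automorphism acts trivially on $G_2/Z(G_2)$ and on $Z(G_2)$.
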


\section{Notations and Preliminaries}\label{section2}
We start by setting some notations for multiplicatively written finite $p$-group $G$.
By $Z(G)$, $\Phi(G)$ and $G'=\gamma_2(G)$, we denote, respectively,  the center, the
Frattini subgroup and the commutator subgroup of $G$. 
The index of a subgroup $H$ in $G$ is denoted by $[G\colon H]$. We write $[a,b]=aba^{-1}b^{-1}$. 
The automorphism group of $G$ is denoted by $\mathrm{Aut}(G)$.
For a $p$-group $G$, we set $\Omega_n(G)=\langle g\colon g^{p^n}=1 \rangle$ and $G^p=\langle
g^p\colon g\in G\rangle$.  
Throughout the paper, {\it $p$ will denote an odd prime}. 
We denote by $\mathbb{F}_p$, the finite field of order $p$. 
We write the maps on the left. 
%
%
%

An automorphism $\varphi$ of a group $G$ is said to be {\it a central automorphism} 
if it induces identity automorphism on $G/Z(G)$, 
or equivalently if it commutes with every inner automorphism of $G$.
Let $\mbox{Autcent}(G)$ denote the group of all the central automorphisms of $G$. 
Note that if $\mbox{Aut}(G)$ is abelian, then $\mbox{Autcent}(G)=\mbox{Aut}(G)$, i.e. every automorphism of $G$ is central. 

The following lemma will be frequently used in the paper without reference.
\begin{lemma}
If $G$ is a $p$-group of nilpotency class $2$ then for all $a,b,c\in G$, 
$$ [ab,c] = [a,c][b,c] \hskip3mm \mbox{ and } \hskip3mm [a,bc]=[a,b][a,c].$$
\end{lemma}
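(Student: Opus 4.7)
The plan is to prove both identities by a direct expansion from the paper's definition $[x,y] = xyx^{-1}y^{-1}$, using only the structural fact that in a class-$2$ group every commutator lies in $Z(G)$ and hence both commutes with all elements of $G$ and with all other commutators. Once centrality of commutators is in hand, each identity becomes a short rearrangement.

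For the first identity, I would start from $(ab)c(ab)^{-1}c^{-1} = abcb^{-1}a^{-1}c^{-1}$ and insert a cancelling pair $c^{-1}c$ in the middle so as to isolate the subword $bcb^{-1}c^{-1} = [b,c]$. This leaves an expression of the form $a[b,c]ca^{-1}c^{-1}$. Pulling the central commutator $[b,c]$ to the front gives $[b,c][a,c]$, which in turn equals $[a,c][b,c]$ because both factors are central and therefore commute with each other.

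For the second identity I would proceed symmetrically. I would use the observation $a(bc)a^{-1} = (aba^{-1})(aca^{-1})$ to rewrite $[a,bc]$ as $(aba^{-1})(aca^{-1})c^{-1}b^{-1}$, then regroup the last three factors as $[a,c]$, obtaining $(aba^{-1})[a,c]b^{-1}$. Moving the central element $[a,c]$ past $aba^{-1}$ produces $[a,c][a,b]$, which is the same as $[a,b][a,c]$ by centrality.

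There is no genuine obstacle here; the lemma is a textbook consequence of $G' \subseteq Z(G)$. The only small care needed is to respect the paper's convention $[a,b] = aba^{-1}b^{-1}$ (rather than the alternative $a^{-1}b^{-1}ab$ used elsewhere in the literature), so that the placement of inverses in each expansion matches the bracket symbol as defined. Because of this, I would not invoke the general commutator identities $[ab,c] = [a,c]^b[b,c]$ and $[a,bc] = [a,c][a,b]^c$ as black boxes, but rather verify them directly in the chosen convention and then specialize.
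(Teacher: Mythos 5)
Your proof is correct: the paper states this lemma without proof (it is invoked as a well-known fact), and your direct expansion from $[x,y]=xyx^{-1}y^{-1}$ using the centrality of commutators in a class-$2$ group is exactly the standard verification the paper is implicitly relying on. Both computations check out, including the final commutation of the two central factors.
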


We recall a generalization of semi-direct product according to \cite[p. 28]{gorenstein}.

%
%
\begin{definition}
A group $G$ is said to be an amalgamated internal semi-direct product of subgroups $H$ by $K$ over $M$, written $G=H\rtimes_M K$, 
if $H\trianglelefteq G$, $G=HK$ and $H\cap K=M$. In particular, if $[H,K]=1$ then $M\subseteq Z(G)$, and we call $G$ the central product of $H$ by $K$ over $M$, written $G=H\times_M K$.
\end{definition}

\begin{remark}\rm
Let $H,K$ be groups, $M\leq H$, $\psi\colon K\rightarrow \mathrm{Aut}(H)$ a homomorphism, and $\theta\colon M\rightarrow K$ an injective homomorphism with 
$\theta(M)\trianglelefteq K$. Assume that $\theta$ and $\psi$ satisfy the following compatibility conditions:  for all $h\in H$, $k\in K$, and $m\in M$,
$$ h^{\psi(\theta(m))}=h^m \mbox{ and } \theta(m^{\psi(k)})=(\theta(m))^k,$$ 
where $x^y:=yxy^{-1}$. Then the quintuple $(H,K,M,\psi,\theta)$ defines a group $G$ of the form $\overbar{G}=\overbar{H}\,\,\overbar{K}$ where $\overbar{H}\trianglelefteq \overbar{G}$, $\overbar{H}\cap \overbar{K}=\overbar{M}\trianglelefteq \overbar{K}$ and $H,K,M$ are isomorphic to $\overbar{H},\overbar{K},\overbar{M}$ respectively, such that the action of $\overbar{K}$ on $\overbar{H}$ corresponds to the action of $K$ on $H$ via $\psi$ (see \cite[p. 27-28]{gorenstein}). We still write $G:=H\rtimes_M K$ and call  the {\it amalgamated external semi-direct product of $H$ by $K$ over $M$}.
  It is often convenient to make no distinction between {\it internal} and {\it external} amalgamated semi-direct products.
\end{remark}

The following result is well-known. 
\begin{lemma}\cite[Exercise 6, p.78]{zassenhaus}
\label{lem4}
Let $G$ be a finite group and $N$ a normal subgroup of $G$. Then the set of automorphisms of $G$ which are identity on $N$ as well as on $G/N$ form an abelian 
subgroup of $\mathrm{Aut}(G)$ whose exponent divides the exponent of $N$.
\end{lemma}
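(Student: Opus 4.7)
My plan is to attach to each $\varphi$ in the set
$$A=\{\varphi\in\mathrm{Aut}(G) : \varphi|_N=\mathrm{id}_N \text{ and } \varphi \text{ induces identity on } G/N\}$$
the map $n_\varphi\colon G\to N$ defined by $n_\varphi(g)=g^{-1}\varphi(g)$. This is well defined with image in $N$ because $\varphi$ acts as the identity on $G/N$, and it encodes $\varphi$ completely via $\varphi(g)=g\,n_\varphi(g)$. All three assertions of the lemma will follow from a single structural observation about $n_\varphi$.

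The crucial step, which I expect to be the one fiddly point, is to prove that $n_\varphi(g)\in Z(N)$ for every $g\in G$. For any $n\in N$, normality gives $gng^{-1}\in N$, hence $\varphi$ fixes this element. Expanding the other way,
\[
\varphi(gng^{-1}) = \varphi(g)\,\varphi(n)\,\varphi(g)^{-1} = g\,n_\varphi(g)\,n\,n_\varphi(g)^{-1}\,g^{-1},
\]
and comparing with $gng^{-1}$ forces $n_\varphi(g)$ to commute with $n$. Since $n\in N$ was arbitrary, $n_\varphi(g)\in Z(N)$ as claimed.

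With this in hand the remaining checks are essentially formal. For $\varphi,\psi\in A$, using that $\varphi$ fixes the element $n_\psi(g)\in N$, one computes
\[
(\varphi\psi)(g) = \varphi\bigl(g\,n_\psi(g)\bigr) = g\,n_\varphi(g)\,n_\psi(g),
\]
so $\varphi\psi\in A$ with $n_{\varphi\psi}(g)=n_\varphi(g)\,n_\psi(g)$; the analogous calculation for $\varphi^{-1}$ gives $n_{\varphi^{-1}}(g)=n_\varphi(g)^{-1}$, so $A$ is a subgroup of $\mathrm{Aut}(G)$. Swapping the roles of $\varphi$ and $\psi$ yields $(\psi\varphi)(g)=g\,n_\psi(g)\,n_\varphi(g)$, and because both $n_\varphi(g)$ and $n_\psi(g)$ lie in $Z(N)$ they commute, giving $\varphi\psi=\psi\varphi$. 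Thus $A$ is abelian.

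For the exponent bound, a trivial induction based on $\varphi(n_\varphi(g))=n_\varphi(g)$ delivers $\varphi^k(g)=g\,n_\varphi(g)^k$. Hence $\varphi^k=\mathrm{id}$ iff $n_\varphi(g)^k=1$ for all $g\in G$, and since each $n_\varphi(g)\in N$ has order dividing $\exp(N)$, we conclude $\mathrm{ord}(\varphi)\mid\exp(N)$, and therefore $\exp(A)\mid\exp(N)$.
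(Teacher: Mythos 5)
Your proof is correct. The paper does not prove this lemma at all---it simply cites it as a known exercise from Zassenhaus---so there is no in-paper argument to compare against; your argument (attaching to each $\varphi$ the derivation $n_\varphi(g)=g^{-1}\varphi(g)$, showing its values lie in $Z(N)$ via $\varphi(gng^{-1})=gng^{-1}$, and then reading off closure, commutativity, and the exponent bound from $n_{\varphi\psi}=n_\varphi n_\psi$ and $\varphi^k(g)=g\,n_\varphi(g)^k$) is the standard and complete proof of this classical fact.
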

We recall some results on Miller groups.
\begin{theorem} \cite[\S 2, \S 3]{hopkins}
 \label{thm1}
If $H$ is a Miller $p$-group, then the following holds true:  

($i$) $H$ has no direct abelian factor. 

($ii$) Every automorphism of $H$ is identity on $H'$.
\end{theorem}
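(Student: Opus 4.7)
The plan is to verify that $G_2$ is non-special and has an abelian automorphism group. To set up the arithmetic, each element of $G_2$ writes uniquely as $hw^i$ with $h\in H$ and $0\le i<p^{k-1}$, where $p^k:=o(w)$. If $\alpha$ denotes the central automorphism of $H$ induced by $w$, write $\alpha(h)=h\delta(h)$ with $\delta\colon H\to Z(H)=H'$; since $\delta(h)^p=1$ (as $H'$ is elementary abelian), induction gives $\alpha^j(h)=h\delta(h)^j$ and $(hw^i)^n=h^n\delta(h)^{i\binom{n}{2}}w^{ni}$, which for $p$ odd simplifies to $(hw^i)^p=h^pw^{ip}$. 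It follows that $G_2'=H'$, and that $w^p\in Z(G_2)$ has order $p^{k-1}\ge p^2$, so $Z(G_2)$ is not elementary abelian and $G_2$ is non-special.

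The key structural step is to identify $H$ as a characteristic subgroup of $G_2$. The formulae give $G_2^{p^{k-1}}=M$ and
\[
L:=\{g\in G_2:g^p\in H'\}=H\langle w^{p^{k-2}}\rangle,
\]
both characteristic. Since $\alpha^{p^{k-2}}=\mathrm{id}$, $L=H\times_M\langle w^{p^{k-2}}\rangle$ is a central product with $\langle w^{p^{k-2}}\rangle$ cyclic of order $p^2$, so $L$ fits precisely the hypotheses of Theorem~\ref{theoremC} (with $z$ replaced by $w^{p^{k-2}}$). The proof of Theorem~\ref{theoremC}, applied to $L$, shows that $H$ is characteristic in $L$; combining, $H$ is characteristic in $G_2$.

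For $\sigma\in\mathrm{Aut}(G_2)$, it follows that $\sigma|_H\in\mathrm{Aut}(H)$, and since $H$ is Miller, Theorem~\ref{thm1}(ii) gives $\sigma|_{H'}=\mathrm{id}$ and $\sigma|_H$ central on $H$; write $\sigma(x_j)=x_jz_j$ with $z_j\in H'$ for a minimal generating set $\{x_1,\ldots,x_n\}$ of $H$, and $\sigma(w)=h'w^s$. The element $w^{p^{k-1}}\in M\subseteq H'$ is fixed by $\sigma$, and from $(h'w^s)^{p^{k-1}}=w^{sp^{k-1}}$ we get $s\equiv 1\pmod p$. Expanding $\sigma(w)\sigma(x_j)\sigma(w)^{-1}=\sigma(\alpha(x_j))$ reduces to $[h',x_j]=\delta(x_j)^{1-s}$, which vanishes since $\delta(x_j)^p=1$ and $s\equiv 1\pmod p$; hence $h'\in Z(H)=H'$. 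Since $H'\subseteq Z(G_2)$ and $w^{s-1}\in\langle w^p\rangle\subseteq Z(G_2)$, $\sigma$ is a central automorphism. Commutativity of $\mathrm{Aut}(G_2)$ then follows by checking $\sigma_1\sigma_2=\sigma_2\sigma_1$ on generators: on each $x_j$ via the abelianness of $H'$ combined with $\sigma_i|_{H'}=\mathrm{id}$, and on $w$ via the identity $(h_i')^{s_j-1}=1$ (which holds since $s_j\equiv 1\pmod p$ and $H'$ has exponent $p$). The main obstacle is the second paragraph's assertion of characteristicness of $H$: the hypothesis that $H/M$ is special Miller (underlying Theorem~\ref{theoremC}) is exactly what forces $\sigma(H)=H$, and without it the conclusion can fail, as witnessed by parts~(2) of Theorems~\ref{theoremA} and~\ref{theoremB}.
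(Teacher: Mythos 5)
Your proposal does not address the statement at all. The statement to be proved is Hopkins' classical result about an arbitrary Miller $p$-group $H$: that $H$ has no abelian direct factor, and that every automorphism of $H$ restricts to the identity on $H'$. What you have written instead is a proof sketch for Theorem~\ref{theoremD}, concerning the amalgamated semi-direct product $G_2=H\rtimes_M\langle w\rangle$. Worse, your argument explicitly invokes Theorem~\ref{thm1}(ii) (``since $H$ is Miller, Theorem~\ref{thm1}(ii) gives $\sigma|_{H'}=\mathrm{id}$''), so as a proof of Theorem~\ref{thm1} it would be circular. Note also that the paper itself offers no proof of this statement; it is quoted from Hopkins' 1927 paper.

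For the record, the intended content is elementary. For (ii): since $\mathrm{Aut}(H)$ is abelian, every automorphism commutes with every inner automorphism, hence every automorphism $\varphi$ is central, i.e.\ $\varphi(x)=xz_x$ with $z_x\in Z(H)$ for all $x$; then for any $x,y\in H$ one computes $\varphi([x,y])=[xz_x,yz_y]=[x,y]$ because the central factors cancel, so $\varphi$ fixes every commutator and hence all of $H'$. For (i): if $H$ had a nontrivial abelian direct factor, one splits off a cyclic direct factor and exhibits two automorphisms (built from a projection onto that factor and a suitable homomorphism back into the complement, or vice versa) that fail to commute, contradicting the abelianness of $\mathrm{Aut}(H)$ for a non-abelian $H$. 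If you want to salvage your text, it belongs with Theorem~\ref{theoremD}, not here.
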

%
\begin{theorem}\cite[p.15]{earnley}
\label{thm2}
If $G$ is a Miller $p$-group then $Z(G)$ and $\Phi(G)$ are non-cyclic.
\end{theorem}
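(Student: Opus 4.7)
My plan is to argue by contradiction for both statements, in each case producing two automorphisms of $G$ that fail to commute.  First I record the standing consequences of $\mathrm{Aut}(G)$ being abelian: $\mathrm{Inn}(G)\cong G/Z(G)$ is abelian, so $G$ has nilpotency class exactly $2$ and $G'\subseteq Z(G)$; moreover every automorphism commutes with every inner automorphism and therefore acts trivially on $G/Z(G)$, so $\mathrm{Aut}(G)=\mathrm{Autcent}(G)$.  By Theorem~\ref{thm1}, $G$ has no abelian direct factor and every automorphism of $G$ fixes $G'$ pointwise.

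For $Z(G)$ non-cyclic, suppose toward a contradiction that $Z(G)=\langle z\rangle$ has order $p^n$, so $G'=\langle z^{p^{n-r}}\rangle$ for some $1\le r\le n$.  I would split on $r$.  When $r\ge 2$, I would use the standard correspondence between central automorphisms and certain maps $G/G'\to Z(G)$ (valid since $G$ is purely non-abelian) to lift the power map $z\mapsto z^{1+p}$ on $Z(G)$ to an automorphism $\sigma$ of $G$.  Since $(1+p)p^{n-r}\not\equiv p^{n-r}\pmod{p^n}$ when $r\ge 2$, this $\sigma$ would fail to fix $z^{p^{n-r}}\in G'$, contradicting Theorem~\ref{thm1}(ii).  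When $r=1$, $G'\cong\mathbb{F}_p$, and the commutator induces a non-degenerate alternating pairing on $V:=(G/Z(G))\otimes\mathbb{F}_p$, an $\mathbb{F}_p$-space of dimension $\ge 2$ (else $G$ would be abelian).  The symplectic group of $V$ is non-abelian, and sufficiently many of its elements lift to automorphisms of $G$---using that the class-$2$ central extension is essentially determined by the commutator form along with the $p$-power map---again producing a non-abelian subgroup of $\mathrm{Aut}(G)$.

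For $\Phi(G)$ non-cyclic, start from $\Phi(G)=G'G^p$, so $\Phi(G)$ cyclic forces $G'$ cyclic.  Having already shown $Z(G)$ non-cyclic, there exists $y\in Z(G)\setminus\Phi(G)$.  For a minimal generating set $\{x_1,\ldots,x_d\}$ of $G$, I would use the same correspondence to construct, for each $i$, a central automorphism $\sigma_i$ with $\sigma_i(x_i)=x_iy$ and $\sigma_i(x_j)=x_j$ for $j\ne i$, after arranging the order of $y$ to be compatible with that of $x_i\bmod G'$.  A direct computation then gives $\sigma_i\sigma_j(x_i)=x_iy$ but $\sigma_j\sigma_i(x_i)=x_i\sigma_j(y)$, and by choosing indices so that $\sigma_j(y)\ne y$ (possible because $y$ lies outside $\Phi(G)$ but can be shifted by the non-trivial $p$-th power $x_j^{p}$ in $\Phi(G)$), one obtains $\sigma_i\sigma_j\ne\sigma_j\sigma_i$, contradicting abelianness of $\mathrm{Aut}(G)$.

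The main obstacle is the lifting step: showing that the candidate maps described on quotients such as $Z(G)$ or $G/Z(G)$ genuinely extend to automorphisms of $G$ compatible with the constraint that $G'$ be fixed pointwise (Theorem~\ref{thm1}(ii)).  This requires careful use of the central-automorphism correspondence together with a modest case analysis on the orders and $\mathbb{F}_p$-ranks of $Z(G)$, $G'$, and $G/Z(G)$.
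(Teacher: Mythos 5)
The paper offers no proof of this statement (it is quoted from Earnley's thesis), so I can only judge your argument on its own terms, and the first half has a fatal gap. Your case $r\ge 2$ cannot work as described: for any central automorphism $\sigma$ of $G$, the map $f(g)=g^{-1}\sigma(g)$ is a homomorphism from $G$ into the abelian group $Z(G)$ (the verification uses only that $f(g)$ is central), so $G'\le\ker f$ and $\sigma$ automatically fixes $G'$ pointwise. Hence no central automorphism can ever contradict Theorem~\ref{thm1}(ii), and since in a Miller group \emph{every} automorphism is central, the contradiction you aim for is unreachable in principle. Concretely, $f(z)$ has order dividing the order of $zG'$ in $G/G'$, which is $p^{n-r}$, so $f(z)\in\langle z^{p^r}\rangle$; the value $f(z)=z^{p}$, of order $p^{n-1}$, that your $\sigma$ would require is attainable only when $r\le 1$ --- exactly the case you exclude. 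A correct argument here must instead produce either a non-central automorphism or two non-commuting central ones. In the case $r=1$, the sentence ``sufficiently many of its elements lift'' is precisely the hard part: an isometry of the commutator form lifts to an automorphism of $G$ only if it is compatible with the $p$-th power map into $Z(G)$ (this is the condition $\hat{\alpha}\circ f=f\circ\alpha$ of Lemma~\ref{lem6}), and showing that a non-abelian group of isometries survives this constraint is the entire content of the claim; it is asserted, not proved.

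The second half is closer to a proof, since it really only needs the fact that $Z(G)\le\Phi(G)$ for Miller groups (which, combined with the first part, gives $\Phi(G)$ non-cyclic), but as written it is also incomplete. Prescribing $\sigma_i(x_i)=x_iy$ and $\sigma_i(x_j)=x_j$ on a minimal generating set yields a well-defined homomorphism $G/G'\to Z(G)$ only if the generators project to an independent (basis) generating set of the abelian group $G/G'$ and $o(y)$ divides $o(x_iG')$; minimal generating sets of abelian $p$-groups need not be independent, so this needs an argument, not the parenthetical ``after arranging.'' Moreover the crucial inequality $\sigma_j(y)\ne y$ is left obscure: $\sigma_j(y)=yf_j(y)$ is determined by expressing $y$ in the chosen generators, and the remark about ``shifting by the $p$-th power $x_j^p$'' does not compute anything. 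The clean repair is to take $y\in Z(G)\setminus\Phi(G)$ itself as the first member of a basis $\{x_1=y,x_2,\dots,x_d\}$ of $G/G'$, define $\sigma$ by $x_1\mapsto x_1y$ and $\tau$ by $x_2\mapsto x_2y$ (fixing the remaining generators), invoke the Adney--Yen correspondence for purely non-abelian groups to see both are automorphisms, and compute $\sigma\tau(x_2)=x_2y^2\ne x_2y=\tau\sigma(x_2)$; even then the divisibility $o(y)\mid o(x_2G')$ must be justified.
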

%
\begin{theorem}\cite[Main Theorem]{curran2}
\label{thm3}
If $G$ is a finite $p$-group, then $\mathrm{Autcent}(G)=\mathrm{Inn}(G)$ if and only if $Z(G)=G'$ and $Z(G)$ is cyclic.
\end{theorem}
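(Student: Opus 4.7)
The plan is to handle both directions through the canonical injection
\[
\Theta\colon \mathrm{Autcent}(G)\hookrightarrow \mathrm{Hom}(G/G',Z(G)),\qquad \varphi\mapsto\bigl(gG'\mapsto g^{-1}\varphi(g)\bigr),
\]
whose well-definedness and injectivity are immediate from the definition of a central automorphism. A preliminary observation is that $\mathrm{Inn}(G)\subseteq \mathrm{Autcent}(G)$ is equivalent to $G$ having nilpotency class at most $2$, because $\sigma_h\in \mathrm{Autcent}(G)$ exactly when $[h,G]\subseteq Z(G)$, i.e.\ when $h\in Z_2(G)$. Whenever that inclusion holds, $\Theta(\sigma_h)$ is the map $gG'\mapsto [g^{-1},h]$, taking values in $G'$, so $\Theta(\mathrm{Inn}(G))\subseteq \mathrm{Hom}(G/G',G')\subseteq \mathrm{Hom}(G/G',Z(G))$.

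For the $(\Leftarrow)$ direction, assume $Z(G)=G'$ is cyclic of order $p^m$. Then $G$ has class at most $2$, so the class-$2$ identity $[x^{p^m},y]=[x,y]^{p^m}=1$ shows that $x^{p^m}\in Z(G)=G'$ for all $x$; in particular $\exp(G/G')\le p^m$. Writing $G/G'=\prod_i C_{p^{a_i}}$ with $a_i\le m$, I would compute
\[
|\mathrm{Hom}(G/G',Z(G))|=\prod_i p^{\min(a_i,m)}=\prod_i p^{a_i}=|G/G'|=|G/Z(G)|=|\mathrm{Inn}(G)|.
\]
Since $\mathrm{Inn}(G)\subseteq \mathrm{Autcent}(G)$ and $\Theta$ embeds $\mathrm{Autcent}(G)$ into $\mathrm{Hom}(G/G',Z(G))$, equal orders of the outer groups force equalities throughout.

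For the $(\Rightarrow)$ direction (assuming $G$ is non-abelian) I would argue in four stages. Stage 1: $G$ has class $2$, by the preliminary observation. Stage 2: $G$ has no abelian direct factor; otherwise a splitting $G=A\times B$ with $A\ne 1$ abelian produces, via any $\alpha\in\mathrm{Aut}(A)\setminus\{1\}$, a central automorphism $\alpha\times\mathrm{id}_B$ that cannot be inner, since inner conjugation fixes $A$ pointwise. Stage 3: invoke the Adney--Yen theorem, valid precisely under the absence of abelian direct factors, to upgrade $\Theta$ to an isomorphism $\mathrm{Autcent}(G)\cong\mathrm{Hom}(G/G',Z(G))$. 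Stage 4: combine $\mathrm{Autcent}(G)=\mathrm{Inn}(G)$ with $\Theta(\mathrm{Inn}(G))\subseteq\mathrm{Hom}(G/G',G')$ to deduce $\mathrm{Hom}(G/G',G')=\mathrm{Hom}(G/G',Z(G))$, whence $\mathrm{Hom}(G/G',Z(G)/G')=0$; since $G/G'\ne 0$ this forces $Z(G)=G'$. Finally, writing $G/Z(G)=\prod_i C_{p^{a_i}}$ and $Z(G)=\prod_j C_{p^{b_j}}$ with $b_1\ge\cdots\ge b_r$, the earlier bound $a_i\le b_1$ reduces the identity $\sum_{i,j}\min(a_i,b_j)=\sum_i a_i$ to $\sum_i\sum_{j\ge 2}\min(a_i,b_j)=0$, and this vanishing is incompatible with $r\ge 2$ once some $a_i\ge 1$, so $Z(G)$ is cyclic.

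The principal obstacle is Stage 3, the surjectivity of $\Theta$: one must verify that for every $f\in\mathrm{Hom}(G/G',Z(G))$ the map $\varphi_f(g):=g\cdot f(gG')$ is an automorphism, and the absence of abelian direct factors is precisely what prevents $\varphi_f$ from collapsing an element of $Z(G)$. A secondary care point is the prime $p=2$, where a $C_2$ direct factor escapes the Stage 2 argument (its automorphism group being trivial) and would have to be excluded by a separate counting device.
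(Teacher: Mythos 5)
The paper offers no proof of this statement --- it is quoted verbatim from the Main Theorem of Curran--McCaughan \cite{curran2} --- so I can only judge your argument on its own terms. Your $(\Leftarrow)$ direction is correct and clean, and Stages 1--3 of the $(\Rightarrow)$ direction are fine (granting Adney--Yen as an external input, and modulo the $p=2$ caveat you yourself flag, which is harmless here since the paper fixes $p$ odd).

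The genuine gap is the first half of Stage 4. The deduction ``every homomorphism $G/G'\to Z(G)$ has image in $G'$, hence $\mathrm{Hom}(G/G',Z(G)/G')=0$'' is false for finite abelian $p$-groups: a homomorphism into $Z(G)/G'$ need not lift to $Z(G)$. For instance, with $A=C_p$, $B=C_{p^2}$, $C=B^p$, every homomorphism $A\to B$ lands in $C$, yet $\mathrm{Hom}(A,B/C)\cong C_p\neq 0$; so if $G/G'$ had exponent $p$ while $Z(G)\cong C_{p^2}$ and $G'=Z(G)^p$, your Stage 4 would conclude nothing. Moreover the cyclicity count that follows cannot rescue this, because it presupposes $Z(G)=G'$: you identify $G/Z(G)$ with $G/G'$ there, and the class-$2$ identity only yields $\exp(G/Z(G))\le\exp(Z(G))$, not $\exp(G/G')\le\exp(Z(G))$. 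The repair is to extract both conclusions simultaneously from the single equality $|G/Z(G)|=|\mathrm{Hom}(G/G',Z(G))|$: writing $G/G'=\prod_i C_{p^{a_i}}$ and $Z(G)=\prod_{j=1}^r C_{p^{b_j}}$ with $b_1$ maximal, one has $|\mathrm{Hom}(G/G',Z(G))|\ge\prod_i p^{\min(a_i,b_1)}=|(G/G')\big/(G/G')^{p^{b_1}}|\ge |G/Z(G)|$, the last inequality because $(G/G')^{p^{b_1}}\le Z(G)/G'$ by the class-$2$ identity. Equality in the first step kills all $j\ge 2$, so $r=1$; equality in the second gives $(G/G')^{p^{b_1}}=Z(G)/G'$, and if this were nontrivial then $G/G'$ would have a cyclic factor of order at least $p^{b_1+1}$, hence would admit a homomorphism onto the cyclic group $Z(G)$ with image not contained in $G'$ --- contradicting the fact (which you did establish correctly) that $\mathrm{Hom}(G/G',Z(G))=\Theta(\mathrm{Inn}(G))\subseteq\mathrm{Hom}(G/G',G')$. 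So your ingredients suffice, but Stage 4 as written is a non sequitur and needs this reassembly. (A cosmetic point: $\Theta$ is in general only an injection of sets, not a group homomorphism, since central automorphisms need not fix $Z(G)$ pointwise; you only use it for counting, so this does no harm.)
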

%

Let $G=A\times B$ be a finite group. Let $i_A,i_B$ denote the natural injections from $A,B$ respectively into $G$ and $\pi_A,\pi_B$ the natural projections 
from $G$ onto $A,B$ respectively. For $\varphi\in \mathrm{Aut}(G)$, define 
$\alpha = \pi_A\varphi i_A$, $\beta= \pi_A\varphi i_B$, $\gamma=\pi_B\varphi i_A$ and $\delta = \pi_B \varphi i_B.$
Then for $a\in A$, $b\in B$ and $\varphi\in\mathrm{Aut}(A\times B)$,  we have $\varphi(a)=\alpha(a)\gamma(a)$ and 
$\varphi(b)=\beta(b)\delta(b)$. It is obvious that 
$$\alpha\in \mathrm{End}(A), \,\,\,\, \beta\in \mathrm{Hom}(B,A), \,\,\,\, \gamma\in \mathrm{Hom}(A,B), \,\,\,\, \mbox{ and} \,\,\,\,  \delta\in \mathrm{End}(B). $$
The following result follows from  \cite[Theorem 3.2]{bidwell}.
\begin{theorem}
\label{thm5}
With the above set up, suppose, $A$ and $B$ have no common direct factor. Then
 $\alpha\in \mathrm{Aut}(A)$,  $\beta\in \mathrm{Hom}(B,Z(A))$, $\gamma\in \mathrm{Hom}(A,Z(B))$ and  $\delta\in \mathrm{Aut}(B)$. 
\end{theorem}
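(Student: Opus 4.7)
The plan is to reduce to Bidwell's Theorem~3.2 (the matrix description of $\mathrm{Aut}(A \times B)$) and to isolate the two points at which the hypothesis ``no common direct factor'' is actually used. The assertions $\alpha \in \mathrm{End}(A)$, $\delta \in \mathrm{End}(B)$, $\beta \in \mathrm{Hom}(B, A)$, $\gamma \in \mathrm{Hom}(A, B)$ are already recorded in the excerpt and are immediate from $\varphi$ being a homomorphism together with the homomorphism property of the projections and injections.

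First, I would record the additional constraint forced by $[A, B] = 1$ in $G$. Expanding $[\varphi(a), \varphi(b)] = 1$ with $\varphi(a) = \alpha(a)\gamma(a)$ and $\varphi(b) = \beta(b)\delta(b)$, and repeatedly using that each element of $A$ commutes with each element of $B$, the commutator collapses to $[\alpha(a), \beta(b)] \cdot [\gamma(a), \delta(b)]$. Since these two factors lie in $A$ and $B$ respectively and $A \cap B = 1$, one obtains
\[
[\alpha(a), \beta(b)] = 1 \quad \text{in } A, \qquad [\gamma(a), \delta(b)] = 1 \quad \text{in } B,
\]
for all $a \in A$ and $b \in B$. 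These are exactly the relations that will convert surjectivity of $\alpha,\delta$ into the claim that $\beta,\gamma$ land in the respective centers.

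Second, I would show $\alpha \in \mathrm{Aut}(A)$; the assertion $\delta \in \mathrm{Aut}(B)$ is entirely symmetric. Since $A$ is finite, it suffices to prove $\alpha$ is surjective. Writing $\varphi^{-1}$ with components $\alpha', \beta', \gamma', \delta'$ and extracting the $A$-component of $\varphi \varphi^{-1}(a) = a$ yields $\alpha(\alpha'(a)) \cdot \beta(\gamma'(a)) = a$, so $A = \alpha(A) \cdot \beta(B)$. Upgrading this to $\alpha(A) = A$ is the technical heart of the matter and is precisely where the hypothesis enters: if $\alpha(A) \subsetneq A$, then comparing the Remak (Krull--Schmidt) decompositions of $A$ and of $\varphi(A)$ inside $G = A \times B$ produces an indecomposable direct summand of $A$ that is isomorphic to a direct summand of $B$, contradicting the no-common-direct-factor hypothesis. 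This Krull--Schmidt style argument is the main obstacle; it is carried out in detail in Bidwell's Theorem~3.2, and I would invoke that theorem for the two-factor case rather than rederive it from scratch.

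Third, once $\alpha$ and $\delta$ are known to be automorphisms, the commutator identities of the first step strengthen: for every $b \in B$ the element $\beta(b)$ commutes with $\alpha(a)$ for all $a \in A$, hence with every element of $\alpha(A) = A$, forcing $\beta(b) \in Z(A)$; symmetrically, $\gamma(a) \in Z(B)$. This yields $\beta \in \mathrm{Hom}(B, Z(A))$ and $\gamma \in \mathrm{Hom}(A, Z(B))$, completing the deduction.
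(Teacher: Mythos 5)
Your proposal is correct and matches the paper's treatment: the paper gives no independent proof but simply derives the statement from Theorem~3.2 of Bidwell--Curran--McCaughan, which is exactly the result you invoke for the key surjectivity step. Your additional details (the collapse of $[\varphi(a),\varphi(b)]$ into componentwise commutators and the deduction that $\beta,\gamma$ map into the centres once $\alpha,\delta$ are onto) are accurate and merely make explicit what the citation already contains.
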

%
%
%
%
\makeatletter
\def\subsection{\@startsection{subsection}{2}%
  \z@{.5\linespacing\@plus.7\linespacing}{.3\linespacing}%
  {\normalfont\bfseries}}
\makeatother
\vskip5mm
\section{Caranti's methods}\label{section3}
In this section, we briefly describe the methods of Caranti  in group theoretic set up. 
A $p$-group $H$ is said to be a {\it special} $p$-group if $Z(H)=H'=\Phi(H)$. 
Let $H$ be special $p$-group  with a presentation
\begin{equation}\label{eq1}
\begin{split}
H=\Big{\langle} x_1,x_2,\ldots, x_n \colon & [x_i,x_j,x_k]=1,\\
 & [x_i,x_j]^p=1,\\
 & x_i^p=\prod_{j<k} [x_j,x_k]^{c_{ijk}},\\
 & \prod_{j<k} [x_j,x_k]^{d_{ljk}}=1, l=1,2,\ldots,t\Big{\rangle},
\end{split}
\tag{*}
\end{equation}
where $c_{ijk}, d_{ljk}\in\mathbb{Z}$. Further, we have a well defined map 
\begin{equation*}
f\colon H/H'\rightarrow H^p, \hskip5mm f(xH')=x^p.
\end{equation*}
The quotient group $H/H'$ can be viewed as a vector space over $\mathbb{F}_p$. 
Every $\varphi$ $\in$ $\mathrm{Aut}(H)$ induces an automorphism $\alpha$ on $H/H'$. 
On the other hand, suppose that $\alpha$ is an automorphism of $H/H'$. 
The action of $\alpha$ on $H/H'=H/Z(H)$ completely determines its action on $H'$ by
\begin{equation*}
\hat{\alpha} \colon H'\rightarrow H', \hskip5mm \hat{\alpha}([x,y])=[\alpha(xH'),\alpha(yH')],
\end{equation*}
and if $\alpha\in \mbox{Aut}(H/H')$ is induced by an automorphism $\varphi$ of $H$, 
then the action of $\hat{\alpha}$ on $H'$ coincides with the restriction of $\varphi$ to $H'$. 

\begin{lemma}\label{lem6}
Let $H$ be a special $p$-group with presentation as in {\rm(\ref{eq1})}. Then the following statements hold true.
\begin{enumerate}[(i)]
\item  An automorphism $\alpha$ of $H/H'$ is induced by an automorphism of $H$ if and only if $\hat{\alpha}\circ f=f\circ \alpha.$
\item  Consequently, $\mathrm{Aut}(H)=\mathrm{Autcent}(H)$ if and only if 
\begin{equation*} 
\{ \alpha\in \mathrm{Aut}(H/H')\colon  \hat{\alpha}\circ f=f\circ \alpha \}=\{I\}.
\end{equation*}
\end{enumerate}
\end{lemma}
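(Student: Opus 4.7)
My plan is to prove part (i) first; part (ii) will then follow immediately once we observe that, since $H$ is special, $Z(H)=H'$, so an automorphism of $H$ is central precisely when it induces the identity on $H/H'=H/Z(H)$.

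For the ($\Rightarrow$) direction of (i), I will start from $\varphi\in\mathrm{Aut}(H)$ inducing $\alpha$ and use that $H$ has nilpotency class $2$: the commutator $[x,y]$ depends only on the cosets $xH'$, $yH'$, so $\varphi([x,y])=[\varphi(x),\varphi(y)]=[\alpha(xH'),\alpha(yH')]=\hat\alpha([x,y])$, which forces $\varphi|_{H'}=\hat\alpha$. The desired identity then comes from applying $\varphi$ to $x^p$: on one hand $\varphi(x^p)=\hat\alpha(x^p)=\hat\alpha(f(xH'))$, and on the other $\varphi(x^p)=\varphi(x)^p=f(\alpha(xH'))$.

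For the ($\Leftarrow$) direction, the natural strategy is to build $\varphi$ from $\alpha$ via the presentation (\ref{eq1}). Choose, for each generator $x_i$, a representative $y_i\in H$ with $y_iH'=\alpha(x_iH')$, and set $\varphi(x_i)=y_i$. By von Dyck's theorem, this will extend to a well-defined endomorphism of $H$ as soon as the four families of defining relations in (\ref{eq1}) are respected by the substitution $x_i\mapsto y_i$. The class-two relations $[x_i,x_j,x_k]=1$ and the exponent-$p$ relations $[x_i,x_j]^p=1$ are automatic in $H$. The commutator relations $\prod[x_j,x_k]^{d_{ljk}}=1$ translate into $\prod[\alpha(x_jH'),\alpha(x_kH')]^{d_{ljk}}=1$, which is precisely the well-definedness of $\hat\alpha$ on $H'$ and so evaluates to $\hat\alpha(1)=1$. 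Finally, the power relations $x_i^p=\prod[x_j,x_k]^{c_{ijk}}$ translate, under the proposed $\varphi$, into the identity $f(\alpha(x_iH'))=\hat\alpha(f(x_iH'))$, i.e.\ the hypothesis $\hat\alpha\circ f=f\circ\alpha$. Thus $\varphi$ is an endomorphism of $H$ inducing $\alpha$, and because $\alpha$ is an automorphism of $H/\Phi(H)=H/H'$, $\varphi$ is surjective, and so lies in $\mathrm{Aut}(H)$.

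For part (ii), once (i) is in hand the argument is formal: $\mathrm{Aut}(H)=\mathrm{Autcent}(H)$ iff every automorphism of $H$ acts as identity on $H/H'$, iff the only $\alpha\in\mathrm{Aut}(H/H')$ induced by some automorphism of $H$ is the identity; by (i) this is the displayed equality. The main bookkeeping hurdle throughout (i) is the interplay between the power relations of type $x_i^p=\prod[x_j,x_k]^{c_{ijk}}$, which carry the hypothesis, and the purely-commutator relations $\prod[x_j,x_k]^{d_{ljk}}=1$, which are tacitly absorbed into the requirement that $\hat\alpha$ makes sense as a map $H'\to H'$; keeping these two roles separate in the write-up is the most delicate point.
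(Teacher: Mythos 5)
Your proof is correct; note that the paper itself offers no argument for this lemma, deferring entirely to \cite[\S 3]{caranti}, so your write-up supplies the details that the citation elides. Both directions are sound. In the forward direction the essential point is exactly the one you use: since $H'=Z(H)$, commutators depend only on cosets modulo $H'$, so any lift $\varphi$ of $\alpha$ must restrict on $H'$ to $\hat\alpha$, and computing $\varphi(x^p)$ in two ways (as $\hat\alpha(f(xH'))$ using $x^p\in H^p\le H'$, and as $\varphi(x)^p=f(\alpha(xH'))$) yields $\hat\alpha\circ f=f\circ\alpha$. In the converse direction your von Dyck check of the four families of relations in (\ref{eq1}) is the right mechanism, and surjectivity of the resulting endomorphism does follow because the images of the $x_i$ generate $H$ modulo $\Phi(H)=H'$. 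You have also correctly isolated the one genuinely delicate point: for an arbitrary $\alpha\in\mathrm{Aut}(H/H')$ the assignment $[x,y]\mapsto[\alpha(xH'),\alpha(yH')]$ descends to a well-defined endomorphism of $H'$ only if $\alpha$ preserves the relations $\prod_{j<k}[x_j,x_k]^{d_{ljk}}=1$, i.e.\ the kernel of the natural surjection from the exterior square of $H/H'$ onto $H'$. In the forward direction this is automatic, since $\hat\alpha$ is then just $\varphi|_{H'}$; in the converse direction it must be read as part of the hypothesis that $\hat\alpha$ exists, which is how you treat it, and which is consistent with Caranti's module-theoretic formulation of the lifting criterion.
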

\begin{proof}
See \cite[\S 3]{caranti}.
\end{proof}

We now briefly describe the methods of Caranti \cite[\S 5, \S 6]{caranti}.
\subsection*{Method 1: To construct Miller $p$-group $G$ with $G'=\Phi(G)<Z(G)$}

Let $H$ be a special Miller $p$-group satisfying the following conditions:
\vskip1mm\hskip5mm
{\bf (I)} The map $f\colon H/H'\rightarrow H^p$, $f(xH')= x^p$ is injective (so $|H^p|=|H/H'|$). 
\vskip1mm\hskip5mm
{\bf (II)} $H^p<H'$. 

Suppose that $H$ has presentation (\ref{eq1}). Let $M\leq H'$ be a subgroup of order $p$ with $M\nsubseteq H^p$. 
Let $G= H\times_M \langle z\rangle$, where $o(z)=p^2$. 
Then $G=\langle x_1,\ldots, x_n,z\rangle$ and  
$$G'=H', \hskip5mm \Phi(G)=\Phi(H), \hskip5mm Z(G)=\langle Z(H), z\rangle.$$
Therefore, $G'=\Phi(G)<Z(G)$, i.e. $G$ is non-special $p$-group. 
Finally, it is proved in \cite{caranti}, 
that every automorphism of $G$ is central, in the following way.
A given $\varphi \in \mbox{Aut}(G)$ induces an automorphism $\alpha$ on the vector space $V=G/\Phi(G)=\langle \bar{x}_1, \ldots, \bar{x}_n,\bar{z}\rangle $. 
Since $Z(G)$ and $\Phi(G)$ are characteristic subgroups of $G$, 
 $\alpha$ leaves $Z(G)/\Phi(G)=\langle \bar{z}\rangle$ invariant, 
and therefore induces an automorphism on $V/\langle \bar{z}\rangle=\langle \bar{x}_1,  \ldots, \bar{x}_n\rangle\cong H/\Phi(H)$, 
which we still denote by $\alpha$. 
Then it is concluded, without proof, in \cite[p. 243]{caranti} that $\alpha$ satisfies the condition 
$$\hat{\alpha}\circ f=f\circ \alpha.$$ 
Equivalently, by Lemma \ref{lem6}(i), 
the automorphism $\alpha$ of $H/H'$ is induced by an automorphism of $H$. 
We show, by an example in \S \ref{section5}, that this is not always true, and $G$ may not be a Miller group.
\subsection*{Method 2: To construct Miller $p$-group $G$ with $G'<\Phi(G)=Z(G)$}
\leavevmode
Let $H$ and $M$ be as in Method 1. Let $H$ satisfies the following additional condition:
\vskip1mm\hskip5mm
{\bf (III)} $H'$ is freely generated by $[x_i,x_j]$, $1\leq i< j\leq n$, 
provided $\{x_1$,  $\ldots$, $x_n\}$ is a {\it minimal} generating set for $H$.
\vskip2mm
Thus $|H|=p^{n+\binom{n}{2}}$.
Since $H$ is a special Miller group, by Theorems \ref{thm2} and \ref{thm3}, 
$H$ possesses a non-inner central automorphism, say $\gamma$. 
(In [\ref{caranti}], the central automorphism $\gamma$ is chosen in such a way that 
the induced map $H/H'\rightarrow Z(H)$, $xH'\mapsto x^{-1}\gamma(x)$ is {\it injective} \cite[p. 244]{caranti},  only to ensure that $\gamma$ is 
{\it not an inner automorphism}.)
Let the presentation of $H$ be as in (\ref{eq1}).  Let $G=H\rtimes_M \langle z\rangle$, 
where $z$ normalizes $H$ in the following way.
\begin{equation*}
zx_iz^{-1}=\gamma(x_i), \hskip3mm z^{p^m}\in H'\setminus H^p, m\geq 2.
\end{equation*}
Note than $o(z)=p^{m+1}\geq p^3$. 
By Theorem \ref{thm1}(ii) and Lemma \ref{lem4}, Aut$(H)$ is elementary abelian. 
Then $z^px_iz^{-p}=\gamma^p(x_i)=x_i$ for all $i$, i.e. $z^p\in Z(G)$.
Thus, we have 
\begin{equation*}
Z(G)=\langle Z(H),z^p\rangle=\langle \Phi(H),z^p\rangle=\Phi(G).
\end{equation*}
Further, since $\gamma$ is a central automorphism, 
\begin{equation*}
[z,x_i]=zx_iz^{-1}x_i^{-1}=\gamma(x_i)x_i^{-1}\in Z(H)=H'.
\end{equation*}
Therefore $G'=H'$. 
Since $z^{p^m}\in H'$ with $m\geq 2$ and $z^p\notin H'=G'$,  $G'<\Phi(G)$.
Then it is proved in [\ref{caranti}] that $G$ is a Miller group, in the following way.
Let $\varphi \in \mbox{Aut}(G)$. Then $\varphi$ induces an automorphism on 
$G/Z(G)=G/\Phi(G)=\langle \bar{x}_1, \ldots, \bar{x}_n,
\bar{z}\rangle$. 
Since 
\begin{equation*}
\Omega_2(G)=\langle g\colon g^{p^2}=1\rangle = \langle x_1,\ldots, x_n,z^{p^{m-1}}\rangle
\end{equation*}
is characteristic, $\varphi$ leaves this subgroup invariant and induces an automorphism  $\alpha$ on the quotient
$\Omega_2(G)\Phi(G)/\Phi(G)=\langle \bar{x}_1, \ldots, \bar{x}_n\rangle\cong H/\Phi(H)$.
Then it is concluded, without proof, in \cite[p. 244]{caranti} that $\alpha$ satisfies the condition 
$$\hat{\alpha}\circ f=f\circ \alpha,$$
i.e. (again by Lemma \ref{lem6}(i)) the automorphism $\alpha$ of $H/H'$ is induced by an automorphism of $H$. 
Again we show, by an example in \S \ref{section5}, that this is not always true, and 
$G$ may not be a Miller group.
\section{A substitute for the methods of Caranti}\label{section4}
In this section, we fill up the gaps in the methods discussed in the preceding section by providing a {\it sufficient condition} on the 
choice of the subgroup $M$, which is used for the amalgamation in both the methods. 
%
%
\vskip5mm
\begin{proof}[Proof of Theorem \ref{theoremC}]
Let $H,M,\langle z\rangle$ and $G_1$ be as stated in the hypothesis of the theorem.
For simplicity, write $G_1=G$. We have $G=\langle H,z\rangle$. Note that $M=H\cap \langle z\rangle = \langle z^p\rangle$. 

Suppose that $\overbar{H}=H/M$ is a Miller group. 
Since $Z(G)=\langle Z(H),z\rangle$, $Z(G)^p=\langle z^p\rangle=M$ is characteristic subgroup of $G$.
\vskip2mm
{\it Claim 1: Every automorphism of $G/M$ is central.}
\vskip2mm
Let $\psi$ be an automorphism of $G/M$. Note that 
$$\frac{G}{M}=\frac{H}{M} \times \frac{\langle z\rangle}{M}=:\overbar{H} \times \langle \bar{z}\rangle.$$
Let $i_{\overbar{H}} $, $i_{\langle \bar{z}\rangle}$ denote the natural injections from $\overbar{H}$, $\langle \bar{z}\rangle$ respectively into 
$\overbar{H} \times \langle \bar{z}\rangle$, and $\pi_{\overbar{H}}$, $\pi_{\langle \bar{z}\rangle}$, the 
natural projections of $\overbar{H}\times \langle \bar{z}\rangle$ onto $\overbar{H}$,  $\langle \bar{z}\rangle$ respectively. 
The automorphism $\psi$ of $\overbar{H}\times \langle \bar{z}\rangle$ uniquely determines the four components 
\begin{align*}
& \hat{\alpha} = \pi_{\overbar{H}}\psi i_{\overbar{H}} \in \mathrm{End}(\overbar{H}) & \hat{\beta}= \pi_{\overbar{H}}\psi i_{\langle \bar{z}\rangle} \in \mathrm{Hom}(\langle \bar{z}\rangle,\overbar{H}),\\
& \hat{\gamma} = \pi_{\langle \bar{z}\rangle}\psi i_{\overbar{H}} \in\mathrm{Hom}(\overbar{H},\langle \bar{z}\rangle), & \hat{\delta} = \pi_{\langle \bar{z}\rangle}\psi i_{\langle \bar{z}\rangle} \mathrm{End}(\langle \bar{z}\rangle).
\end{align*}
Since $|\langle \bar{z}\rangle|=p$, by Theorem \ref{thm1}(i),  $\overbar{H}$ and $\langle \bar{z}\rangle$ have no common direct factor. 
By Theorem \ref{thm5}, $\hat{\alpha} \in \mathrm{Aut}(\overbar{H})$,  $\hat{\gamma}\in \mathrm{Hom}(\overbar{H}, \langle \bar{z}\rangle)$ and for any $x\in\overbar{H}$, $\psi(x)=\hat{\alpha}(x)\hat{\gamma}(x)$. 

Since $\overbar{H}$ is a Miller group, for any $x\in \overbar{H}$ we have $x^{-1} \hat{\alpha}(x)\in Z(\overbar{H})$. Thus
$$ x^{-1}\psi(x) = x^{-1}\hat{\alpha}(x) \hat{\gamma}(x) \in Z(\overbar{H})\times \langle \bar{z}\rangle=
Z(\overbar{H}\times \langle z\rangle), \hskip5mm \mbox{ for all } x\in\overbar{H}.$$
Since $\bar{z}$ is central in $\overbar{H}\times \langle \bar{z}\rangle$, the last equation 
implies that $\psi$ is a central automorphism of $\overbar{H}\times \langle \bar{z}\rangle$. This proves the claim.

\vskip2mm
{\it Claim 2: Every automorphism of $G$ is central.}
\vskip2mm

Both $H$ and $\overbar{H}=H/M$ are special $p$-groups and $M\subseteq \gamma_2(H)=Z(H)$, hence 
$Z(\overbar{H})=\gamma_2(\overbar{H}) = \gamma_2(H)/M=Z(H)/M$. Then 
\begin{align*}
Z(G/M) = Z(\overbar{H}\times \langle \bar{z}\rangle) = Z(\overbar{H})\times \langle \bar{z}\rangle = \frac{Z(H)}{M} \times \frac{\langle z\rangle}{M} = \frac{Z(H)\langle z\rangle}{M}=\frac{Z(G)}{M}.
\end{align*}
Since $M$ is characteristic in $G$, any $\varphi\in \mathrm{Aut}(G)$ induces an automorphism on $G/M$, which is central (by Claim 1). It follows that 
$\varphi$ is a central automorphism of $G$. 

\vskip2mm
{\it Claim 3: $\mathrm{Aut}(G)$ is abelian.}
\vskip2mm
Let $\varphi\in\mathrm{Aut}(G)$ and $\{ x_1,\ldots, x_n\}$ be a minimal generating set for $H$. Then 
$\{x_1,\ldots,x_n,z\}$ is a minimal generating set for $G$ and (as seen in Method 1, \S \ref{section3}) $\Phi(G)<Z(G)=\langle Z(H),z\rangle = \langle \Phi(G),z\rangle$. 
Now $G/\Phi(G)$ is a vector space with 
$\{\tilde{x}_1$, $\ldots$, $\tilde{x}_n$,$\tilde{z}\}$ as a basis, and $Z(G)/\Phi(G)=\langle \tilde{z}\rangle$ is a $\varphi$-invariant subspace. 
Thus, if 
$$ \varphi(\tilde{x}_j) = \sum_{i=1}^n \alpha_{ij} \tilde{x}_i + \lambda_j \tilde{z}_j \mbox{ and } \varphi(\tilde{z}) = \mu \tilde{z}, \hskip3mm (j=1,\ldots,n),$$
then the matrix of the action of $\varphi$ on $G/\Phi(G)$ is
$$\begin{bmatrix}
\alpha & 0\\
\lambda & \mu
\end{bmatrix}, \hskip5mm \alpha=[\alpha_{ij}]_{n\times n}, \lambda=[\lambda_1 \, \cdots \, \lambda_n].
$$
Here $\alpha$ is the matrix of the action of $\varphi$ on $G/Z(G)=\langle \tilde{x}_1, \ldots, \tilde{x}_n\rangle$ with basis
$\{\tilde{x}_1, \ldots, \tilde{x}_n \}$, which is, by Claim 2,  identity. Thus, $\alpha=I_{n\times n}$ in the above matrix. 
From here onwards, we can continue the arguments of \cite{caranti} (p. 243, after proving that $\alpha=1$) to conclude that $\mathrm{Aut}(G)=\mathrm{Aut}(G_1)$ is abelian.
\end{proof}
\vskip5mm
\begin{proof}[Proof of Theorem \ref{theoremD}]
We have $H, M, \langle w\rangle$ and $G_2=H\rtimes_M \langle w\rangle$ be as stated in the theorem.
We have $o(w)=p^{m+1}\geq p^3$, and $M=H\cap \langle w\rangle$, a subgroup of order $p$ in $H'$ but not in $H^p$. 
Then $w^{p^{m}}\in H'\setminus H^p$. Further (as seen in \S 3), $\Phi(G_2)=Z(G_2)=\langle Z(H),w^p\rangle$.

Set $z=w^{p^{m-1}}$ and $G_1=H\langle z\rangle$. Then $o(z)=p^2$, and $z$ centralizes $H$ (for, $m\geq 2$ so  
$\langle z\rangle =\langle w^{p^{m-1}}\rangle \subseteq \langle w^p\rangle\subseteq Z(G)$). Thus, $G_1$ satisfies the hypothesis in Theorem \ref{theoremC},  $G_1$ is a Miller group.
Now $G_1$ is characteristic subgroup of $G_2$, since 
\begin{equation*}
G_1 = \Omega_2(G_2)= \langle g\in G\colon g^{p^2}=1\rangle.
\end{equation*}
Let $\{ x_1,\ldots,x_n\}$, a minimal generating set for $H$. Then $\{ w,x_1,\ldots,x_n\}$ is a minimal generating set for $G_2$. 
Now $G_2/Z(G_2)$ is a vector space over $\mathbb{F}_p$ with $\{\bar{w}$, $\bar{x}_1$, $\ldots$, $\bar{x}_n\}$ as a basis. 
Thus, if $\varphi\in \mathrm{Aut}(G_2)$ then $\varphi$ acts on $G_2/Z(G_2)$ 
and the subspace $\Omega_2(G_2)Z(G_2)/Z(G_2)$ $=$ $\langle \bar{x}_1$, $\ldots$, $\bar{x}_n\rangle$ is $\varphi$-invariant. 
Thus, if 
\begin{equation*}
\varphi(\bar{w})=\tau \bar{w} + \sum_{j=1}^n \sigma_j \bar{x}_j \mbox{ and } \varphi(\bar{x}_i)=\sum_{j=1}^n \alpha_{ij}\bar{x}_i, \hskip5mm (i=1,2,\ldots,n),
\end{equation*}
then the matrix of the action of $\varphi$ on $G_2/Z(G_2)$ w.r.t. the above basis is
\begin{equation*}
\begin{bmatrix}
\tau & 0 \\
\sigma
& \alpha
\end{bmatrix},
\end{equation*}
where $\tau\in\mathbb{F}_p$, $\sigma=[\sigma_1 \, \cdots \sigma_n]^t$ and $\alpha=[\alpha_{ij}]_{n\times n}$. 
Recall that $G_1$ is a Miller group which is characteristic in $G_2$. 
Hence $\varphi$ is identity on $G_1/Z(G_1)$. In particular 
$\varphi(x_i)\equiv x_i \pmod{Z(G_1)}$. 
But, since $m\geq 2$, 
\begin{equation*}
Z(G_1)=\langle Z(H),z\rangle = \langle Z(H),w^{p^{m-2}}\rangle \leq \langle Z(H),w^p\rangle =Z(G_2);
\end{equation*}
it follows that $\varphi(x_i)\equiv x_i  \pmod{Z(G_2)}$, i.e. $\alpha=I_n$ in the above matrix. 
This conclusion now allows us to continue the argument of \cite{caranti} (p. 244, after proving that $\alpha=1$) to conclude 
that $G_2$ is a Miller group.

\end{proof}

\section{Proofs of Theorems A and B}\label{section5}
%
%
%
Before starting the proofs of Theorems \ref{theoremA} and \ref{theoremB}, we make a remark.
Since Miller $p$-groups are generated by at least $4$ elements (see \cite{morigi2}), we have $|H/H'|\geq p^4$. By conditions (I) and (II) in \S \ref{section3}, 
$|H'|>|H^p|\geq p^4$. Therefore, $|H|\geq p^9$. If $H$ also satisfies condition (III) in \S \ref{section3}, then $|H|=p^{n+\binom{n}{2}}\geq p^{4+\binom{4}{2}}=p^{10}$. 
There are examples of special Miller $p$-groups in the literature satisfying all these conditions (see \cite{heineken2}), but  these are of large order ($p^{45}$). 
We start with the following example of minimum order.

\begin{example}\label{example2}
{\rm Let $H=\langle a,b,c,d\rangle$ be a $p$-group of class $2$ with following additional relations:
\begin{equation*}
a^p=[a,c], \,\,\,\, b^p=[a,bcd], \,\,\,\, c^p=[b,cd], \,\,\,\, d^p=[b,d].
\end{equation*}
It is easy to see that $H$ is a special $p$-group of order $p^{10}$ with 
\begin{equation*}
Z(H)=\Phi(H)=H'=\langle [a,b], [a,c], [a,d], [b,c], [b,d],[c,d]\rangle.
\end{equation*}}
\end{example}
\begin{lemma}\label{lemma3}
The group $H$ in Example \ref{example2} is a Miller group.
\end{lemma}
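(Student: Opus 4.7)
The plan is to apply Lemma \ref{lem6}(ii): it suffices to show that the only $\alpha\in\mathrm{Aut}(H/H')$ satisfying $\hat{\alpha}\circ f=f\circ\alpha$ is the identity. First I would quickly verify that, under the given presentation and class-$2$ assumption, $H$ is indeed special of order $p^{10}$ and the six commutators $[a,b],[a,c],[a,d],[b,c],[b,d],[c,d]$ form an $\mathbb{F}_p$-basis of $H'=Z(H)$; this is standard once one checks no further collapses occur among the given relations (four relations in a $6$-dimensional space).

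Next, represent an arbitrary $\alpha\in\mathrm{Aut}(H/H')$ by a matrix $A=[a_{ij}]\in GL_4(\mathbb{F}_p)$ in the basis $\{\bar a,\bar b,\bar c,\bar d\}$, with $\alpha(\bar x_j)=\sum_i a_{ij}\bar x_i$ where $(x_1,x_2,x_3,x_4)=(a,b,c,d)$. Using the bilinearity of the commutator modulo $H'$ in a class-$2$ group, I would record
\[
\hat{\alpha}([x_k,x_l])=\prod_{i<j}[x_i,x_j]^{m_{ij,kl}},
\]
where $m_{ij,kl}=a_{ik}a_{jl}-a_{il}a_{jk}$ is the $2\times 2$ minor of $A$ on rows $i,j$ and columns $k,l$. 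Then using the relations I would write out
\[
f(\bar a)=[a,c],\quad f(\bar b)=[a,b][a,c][a,d],\quad f(\bar c)=[b,c][b,d],\quad f(\bar d)=[b,d],
\]
and for each of the four basis elements expand both $\hat{\alpha}(f(\bar x))$ and $f(\alpha(\bar x))$ in the $6$-element basis of $H'$. Equating coefficients produces a system of $24$ polynomial equations in the $16$ entries of $A$.

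The way to tame this is to exploit the sparsity of $f$. The commutator $[c,d]$ is absent from the image of $f$, so the $[c,d]$-coefficient of $f(\alpha(\bar x))$ is zero for every $\bar x$, giving four equations of the form $m_{34,kl}=0$ that force the columns of the $2\times 4$ submatrix $(a_{ij})_{i\in\{3,4\}}$ to lie on a common line (or to vanish) in $\mathbb{F}_p^2$. The same strategy applied to the $[a,b]$-, $[a,d]$-, and $[b,c]$-coefficients (each appearing in only one of $f(\bar a),\ldots,f(\bar d)$) yields further rigid constraints linking rows $1,2$ of $A$ to its other entries. Finally, the coefficients of $[a,c]$ and $[b,d]$—each appearing in two of the $f(\bar x)$—produce the equations that, together with $\det A\neq 0$, should pin $A$ down to $I_4$.

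The main obstacle is purely the bookkeeping: 24 equations, 16 unknowns, and an invertibility constraint. The presentation was clearly engineered so that the system has a unique solution, but one has to be disciplined about the order of attack. I expect the cleanest route is: first kill the $[c,d]$-coefficients to restrict the $\bar c,\bar d$-components of $\alpha$; then kill the $[a,b]$- and $[a,d]$-coefficients to control the $\bar a,\bar b$-components; then use the remaining two coefficient equations to remove any lingering off-diagonal freedom, invoking $\det A\neq 0$ only at the very end to rule out the degenerate solution.
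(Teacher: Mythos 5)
Your framework is legitimate and is genuinely different from the paper's. The paper passes to $H/H^{p}$, writes it as a direct product of two nonabelian groups of order $p^{3}$, uses conjugacy-class sizes and the subgroups $[x,H]$ to produce the chain of characteristic subgroups $\langle a,Z\rangle,\langle b,Z\rangle,\langle c,Z\rangle,\langle bcd,Z\rangle$, and then applies a general automorphism to the defining relations. Your route through Lemma \ref{lem6}(ii) is also valid: for a special group, $\mathrm{Aut}(H)=\mathrm{Autcent}(H)$ forces every automorphism to act trivially on both $H/H'$ and $H'$, and Lemma \ref{lem4} then gives that $\mathrm{Aut}(H)$ is elementary abelian, hence $H$ is Miller. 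Your formulas for $f$ and for the minors $m_{ij,kl}$ are correct, and since $H'$ here is $6$-dimensional the map $\hat{\alpha}$ is well defined by that formula.

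The gap is that you never solve the system, and the one structural claim you do make about it is false. The four $[c,d]$-coefficient conditions are $m_{34,13}=0$, $m_{34,24}=0$, $m_{34,23}+m_{34,24}=0$ and $m_{34,12}+m_{34,13}+m_{34,14}=0$; so only the minors on column pairs $(1,3),(2,3),(2,4)$ vanish, together with $m_{34,12}=-m_{34,14}$, and the minor on columns $(3,4)$ is unconstrained. These equations do \emph{not} force the columns of the bottom $2\times 4$ block onto a common line: the identity matrix itself has bottom block $\left(\begin{smallmatrix}0&0&1&0\\0&0&0&1\end{smallmatrix}\right)$, of rank $2$, and satisfies all four conditions. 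Any correct deduction at this stage must therefore retain at least the branch in which the first two columns vanish and the last two are independent --- which is exactly the branch containing the solution you are trying to isolate. Since the entire content of the lemma is that this $24$-equation system has only the trivial invertible solution, and you justify that only by saying the presentation was ``clearly engineered'' to make it so, the proof is not yet there: you must actually carry out the case analysis (with the first step corrected as above), or else fall back on an argument of the paper's characteristic-subgroup type.
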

\begin{proof}
For simplicity, write $Z(H)=Z$. Let $\overbar{H}=H/H^p$.
Then $\overbar{H}=\langle \bar{a}, \bar{b}, \bar{c}, \bar{d}\rangle$, 
and the following relations hold in $\overbar{H}$:
\begin{equation*}
[\bar{a},\bar{c}]=1, \,\,\,\, [\bar{a},\bar{b}\bar{c}\bar{d}]=1, \,\,\,\, [\bar{b},\bar{c}\bar{d}]=1,\,\,\,\, [\bar{b},\bar{d}]=1.
\end{equation*}
These relations can be rewritten in the following way:
\begin{equation*}
[\bar{a},\bar{c}]=[\bar{a},\bar{b}\bar{d}]=1 \mbox{ and } [\bar{b},\bar{c}]=[\bar{b},\bar{b}\bar{d}]=1.
\end{equation*}
Then these relations imply that 
$\overbar{H} = \langle \bar{a},\bar{b}\rangle \times \langle \bar{c},\bar{b}\bar{d} \rangle = H_1\times H_2$, the 
internal direct product of two non-abelian subgroups of order $p^3$. 
Any $h_i$ in $H_i$ has at most $p$ conjugates in $H_i$ ($i=1,2$). 
Hence an element $h_1h_2\in H_1\times H_2$ will have exactly $p$-conjugates 
if and only if exactly one of $h_i$ ($i=1,2$) is central in $H_i$. 
Thus, the elements in $\overbar{H}=H_1\times H_2$ with exactly $p$ conjugates constitute the sets 
\begin{align*}
S_1 &=\{ \bar{a}^i\bar{b}^jz_1\colon (i,j)\neq (0,0), \hskip2mm  i,j\in\mathbb{F}_p, \hskip2mm z_1\in Z(\overbar{H})\}\\
\mbox{ and } S_2&= \{ \bar{c}^k (\bar{b}\bar{d})^lz_2\colon (k,l)\neq (0,0),\hskip2mm k,l\in\mathbb{F}_p, \hskip2mm z_2\in Z(\overbar{H})\};
\end{align*}
hence $S_1\cup S_2$ is a characteristic {\it subset} of $\overbar{H}$. 
Given any $x_i\in S_i$, 
the only elements in $S_1\cup S_2$ which do not commute with $x_i$  lie within $S_i$, 
hence an automorphism of $\overbar{H}$ either interchanges $S_1$, $S_2$ 
or  leaves them invariant. 
Note that $\langle S_1\rangle=\langle \bar{a},\bar{b},Z(\overbar{H})\rangle$ 
and  $\langle S_2\rangle=\langle \bar{c},\bar{b}\bar{d},Z(\overbar{H})\rangle$. 
Consequently, in $H$, 
any automorphism either interchanges the subgroups $\langle a,b,Z\rangle$ 
and $\langle c,bd,Z\rangle$ or leaves them invariant. 

Let $A=\langle a,b,Z\rangle$, $B=\langle c,bd,Z\rangle$. 
Then $A^p=\langle a^p,b^p\rangle$, $B^p=\langle c^p,b^pd^p\rangle$. Next, there is $x\in A$ such that $A^p\leq [x,H]$. For
$$\langle a^p,b^p\rangle = \langle [a,c], [a,bcd]\rangle \leq [a,H].$$ 
But, it can be shown that, there is no $y\in B$ such that $B^p\leq [y,H]$. 
Hence $A$ and $B$ can not be interchanged by any automorphism of $H$, 
they must be characteristic subgroups of $H$. 

Now, the only elements $x\in A$ with $A^p\leq [x,H]$ are the elements 
$a^iz$, where $i\neq 0$ and $z\in Z$; these elements together with $Z$ generate the characteristic subgroup $\langle a,Z\rangle$ of $H$. Then 
$\langle a,Z\rangle^p=\langle a^p\rangle$ is characteristic in $H$, and one can see that for arbitrary $c^i(bd)^jz$ in the characteristic subgroup $B$, 
\begin{equation*}
\langle a^p\rangle \leq [c^i(bd)^jz,H] \mbox{ if and only if } j=0 \mbox{ and } i\neq 0.
\end{equation*}
Thus the elements $c^iz$, where $i\neq 0$ and $z\in H$, together with $Z$ generate the characteristic subgroup $\langle c,Z\rangle$ of $H$. 
Then $\langle c,Z\rangle^p=\langle c^p\rangle$ is characteristic in $H$. We can see that 
\begin{align*}
& \langle c^p\rangle \leq [a^ib^jz,H] \mbox{ if and only if } i=0 \mbox{ and } j\neq 0\\
\mbox{ and } & \langle c^p\rangle \leq [c^i(bd)^jz,H] \mbox{ if and only if } i=j\neq 0.
\end{align*}
We get, as before, that $\langle b,Z\rangle$ and $\langle cbd,Z\rangle$ are characteristic subgroups of $H$. 
Note that $cbdZ=bcdZ$, since $H/Z$ is abelian. We have obtained the following characteristic subgroups of $H$:
$$ \langle a,Z\rangle, \langle b,Z\rangle, \langle c,Z\rangle, \langle bcd, Z\rangle.$$
As a consequence, for any automorphism $\varphi$ of $H$, we have
$$ \varphi\colon a\mapsto a^iz_1, \,\,\, b\mapsto b^jz_2, \,\,\, c\mapsto c^kz_3, \,\,\, bcd\mapsto (bcd)^lz_4,$$
where $i,j,k,l\in\mathbb{F}_p^{\times}$ and $z_1,z_2,z_3,z_4\in Z$. Let 
$$\varphi(d)=a^rb^sc^td^uz', \hskip5mm (r,s,t,u\in\mathbb{F}_p, z'\in Z).$$

(i) Applying $\varphi$ to the relation $[a,c]=a^p$ gives $k=1$. 

(ii) Applying $\varphi$ to the relation $[a,bcd]=b^p$ gives $il=j$. 

(iii) Applying $\varphi$ to the relation $[b,cd]=c^p$ (i.e. $[b,bcd]=c^p$) gives $jl=1$. 

(iv) Apply $\varphi$ to the relation $[b,d]=d^p$ and express $p$-th powers as commutators: 
\begin{equation*}
[b^jz_2, a^rb^sc^td^uz'] = a^{rp} b^{sp} c^{tp} d^{up},
\end{equation*}
which on simplification gives
\begin{equation*}
[a,b]^{-jr} [b,c]^{jt} [b,d]^{ju} = [a,c]^r [a,b]^s [a,c]^s [a,d]^s [b,c]^t [b,d]^t [b,d]^u.
\end{equation*}
Comparing coefficients of $[a,c]$ and $[a,d]$, we get $r+s=0$, $s=0$;  hence $r=s=0$ and 
$$[b,c]^{jt} [b,d]^{ju} = [b,c]^t [b,d]^{t+u} \hskip1cm (**).$$
This implies that 
$$jt=t \mbox{ and } ju=t+u.$$
If $j\neq 1$ then $t=0$ and $ju=u$, hence $u=0$. But then $\varphi(d)=z'\in Z$, a contradiction (since $d\notin Z$). Hence $j=1$, and using this in (ii) and (iii) 
we get $i=l=1$. Thus $\varphi$ is identity modulo $Z$ on $\{a,b,c,bcd\}$. 
Since $a,b,c,bcd$ generate $H$, $\varphi$ is identity on $H/Z(H)$ and so is on $H'=Z(H)$ (by \cite[\S 2]{hopkins}).  
Thus by Lemma \ref{lem4}, $\mathrm{Aut}(G)$ is elementary abelian.
\end{proof}
We are now ready to prove Theorems \ref{theoremA} and \ref{theoremB}.
%
Let $H$ be the group as in Example \ref{example2}. 
Note that $[a,b], [a,d]\in H'\setminus H^p$. 

\vskip5mm
\begin{proof}[Proof of Theorem \ref{theoremA}] 
We have $G_1=H\times_M \langle z\rangle$ with $o(z)=p^2$ and $M=H\cap \langle z\rangle=\langle z^p\rangle$ a subgroup of order $p$ in 
$H$ but not in $H^p$. 

(1) Suppose $M=\langle [a,b]\rangle$. To show that $G_1$ is a Miller group, by Theorem \ref{theoremC}, it suffices to show that $H/M$ is a special Miller $p$-group. Thus $G_1=\langle a,b,c,d,z\rangle$.

Write $\overbar{H}=H/M=H/\langle [a,b]\rangle$. 
The following relations hold in $\overbar{H}=\langle \bar{a},\bar{b},\bar{c},\bar{d}\rangle$:
\begin{equation*}
1=[\bar{a},\bar{b}], \hskip3mm \bar{a}^p=[\bar{a},\bar{c}], 
\hskip3mm \bar{b}^p=[\bar{a},\bar{b}\bar{c}\bar{d}],
\hskip3mm \bar{c}^p=[\bar{b},\bar{c}\bar{d}], \hskip3mm \bar{d}^p=[\bar{b},\bar{d}].
\end{equation*}
It follows easily that $\overbar{H}$ is special $p$-group. It remains to show that $\overbar{H}$ is a Miller group, which 
can be proved almost by similar arguments as in the proof of Lemma \ref{lemma3}, hence we only sketch the proof.

The above relations in $\overbar{H}$ imply that 
$A=\langle \bar{a},\bar{b},Z(\overbar{H})\rangle$ is unique abelian subgroup of index $p^2$ in $\overbar{H}$, 
hence it is characteristic in $\overbar{H}$. 
Then $A^p=\langle \bar{a}^p,\bar{b}^p\rangle$ is characteristic in $\overbar{H}$. 
For $x\in \overbar{H}$ one can see that $[x,\overbar{H}]\leq A^p$ if and only if $x\in \langle a,Z(\overbar{H})\rangle$. Thus, 
$B=\langle a,Z(\overbar{H})\rangle$ is characteristic subgroup of $\overbar{H}$, and so is $B^p=\langle a^p\rangle$.

Consider $\widetilde{H}=\overbar{H}/B^p$. 
Then $\widetilde{H}=\langle \tilde{a},\tilde{b},\tilde{c},\tilde{d}\rangle$, 
and following relations hold in $\widetilde{H}$:
\begin{equation*}
[\tilde{a},\tilde{c}]=[\tilde{a},\tilde{b}]=1 \mbox{ and } \tilde{b}^p=[\tilde{a},\tilde{d}], \tilde{c}^p=[\tilde{b},\tilde{c}\tilde{d}], \tilde{d}^p=[\tilde{b},\tilde{d}].
\end{equation*}
From these relations, one can see that the elements of $\langle \tilde{a},\tilde{b},\tilde{c},Z(\overbar{H})\rangle$ 
have at most $p^2$ conjugates, while other elements have $p^3$ conjugates in $\widetilde{H}$. Hence 
$\langle \tilde{a},\tilde{b},\tilde{c},Z(\widetilde{H})\rangle$ is characteristic in $\widetilde{H}$. 
Consequently, $\langle \bar{a},\bar{b},\bar{c},Z(\overbar{H})\rangle$ is characteristic in $\overbar{H}$. Thus, 
$$Z(\overbar{H}) < \langle \bar{a},Z(\overbar{H})\rangle  < \langle \bar{a},\bar{b},Z(\overbar{H})\rangle < \langle \bar{a},\bar{b},\bar{c},Z(\overbar{H})\rangle < \overbar{H}$$ 
is a series of characteristic subgroups in $\overbar{H}$. Thus for any  $\varphi\in\mbox{Aut}(\overbar{H})$
\begin{align*}
\varphi(\bar{a})=\bar{a}^{i_1}z_1,   \hskip3mm \varphi(\bar{b}) = \bar{a}^{i_2}\bar{b}^{j_2}z_2,\hskip3mm \varphi(\bar{c}) = \bar{a}^{i_3}\bar{b}^{j_3}\bar{c}^{k_3}z_3 \hskip3mm \varphi(\bar{d}) = \bar{a}^{i_4}\bar{b}^{j_4}\bar{c}^{k_4}\bar{d}^{l_4}z_4,
\end{align*}
for some $z_1,z_2,z_3,z_4\in Z(\overbar{H})$.
Using the relations in $\overbar{H}$, one can deduce that $\varphi$ is identity on $\overbar{H}/Z(\overbar{H})$. Since $\overbar{H}$ is a special $p$-group, 
$\mathrm{Aut}(\overbar{H})$ is elementary abelian, i.e. $\overbar{H}$ is a Miller group.

(2) Suppose $M=H\cap \langle z\rangle =\langle [a,d]\rangle$. Since $M=\langle z^p\rangle$, we can assume, without loss of generality, that $z^p=[a,d]$. Thus $G_1=\langle a,b,c,d,z\rangle$ and the following relations hold in $G_1$:
\begin{align*}
& a^p=[a,c], \hskip3mm b^p=[a,bcd], \hskip3mm c^p=[b,cd], \hskip3mm d^p=[b,d],\\
& [z,a]=[z,b]=[z,c]=[z,d]=1, \hskip3mm z^p=[a,d].
\end{align*}

To show that $G_1$ is not a Miller group, consider the following map:
\begin{align*}
\varphi\colon a\mapsto az, \hskip3mm b\mapsto bz, \hskip3mm c\mapsto cd,\hskip3mm d\mapsto d,\hskip3mm z\mapsto z.
\end{align*}
It is easy to see that $\varphi$ respects all the relations in $G_1$. 
Clearly, $\varphi(c)\not\equiv c$ mod $Z(G_1)$. It follows that $\varphi$ is a non-central automorphism 
and $\mathrm{Aut}(G_1)$ is non-abelian.
\end{proof}
\vskip5mm
\begin{proof}[Proof of Theorem \ref{theoremB}]
We have following relations in $H=\langle a,b,c,d\rangle$, the $p$-group of class $2$:
\begin{equation}\label{eq2}
a^p=[a,c], \,\,\,\, b^p=[a,bcd], \,\,\,\, c^p=[b,cd], \,\,\,\, d^p=[b,d]. \tag{R1}
\end{equation}
It is easy to see that the map
$$\gamma\colon a\mapsto ad^p, \hskip3mm b\mapsto b, \hskip3mm c\mapsto c, \hskip3mm d\mapsto d$$
extends to a central automorphism of $H$, and it not inner since 
$$[a,H]=\langle [a,b],[a,c],[a,d] \rangle =\langle b^pa^{-p}[a,d]^{-1}, a^p, [a,d] \rangle=\langle a^p, b^p, [a,d]\rangle$$
and $d^p\notin [a,H]$; hence $a$ is not conjugate to $ad^p$. 

Define $G_2=H\rtimes _M \langle w\rangle$, where $M=H\cap \langle w\rangle$ is a subgroup of order in $H'$ but not in $H^p$, 
$o(w)=p^3$ and $w$ normalizes $H$ in the following way:
\begin{equation}\label{eq3}
 waw^{-1}=ad^p,\hskip3mm wbw^{-1}=b, \hskip3mm wcw^{-1}=c, \hskip3mm wdw^{-1}=d.\tag{R2}
\end{equation}
Then $G_2$ is $p$-group of order $p^{12}$, and 
$$\Phi(G_2)=Z(G_2)=\langle Z(H),z^p\rangle, \hskip1cm G_2'=H'.$$ 

(1) Suppose $M=H\cap \langle w\rangle = \langle [a,b]\rangle$. From the proof of Theorem \ref{theoremA}, $H/M$ is a special Miller $p$-group. 
Therefore, by Theorem \ref{theoremD}, $G_2$ is Miller group.

(2) Suppose that $M=H\cap \langle w\rangle = \langle [a,d]\rangle$. Since $|H\cap \langle w\rangle|=p$ and $o(w)=p^3$, we have 
$\langle [a,d]\rangle=H\cap \langle w\rangle=\langle w^{p^2}\rangle$. Without loss of generality, we can assume that 
\begin{equation}\label{eq4}
w^{p^2}=[a,d].\tag{R3}
\end{equation}
Then the elements $a,b,c,d,w$ with relations \ref{eq2}, \ref{eq3}, \ref{eq4} give a presentation of $G_2$, and it is easy to see that the following map respects all the relations in $G_2$:
\begin{equation*}
\varphi\colon a\mapsto aw^p,  \hskip3mm b\mapsto bw^p, \hskip3mm c\mapsto cd, \hskip3mm d\mapsto d, \hskip3mm w\mapsto w.
\end{equation*}
It follows that $\varphi$ extends to an automorphism of $G_2$, which is clearly 
non-central, and $G_2$ is not a Miller group. 
\end{proof}

We conclude our paper by some analysis on certain special Miller $p$-groups $H$, in which, it turns out that for most of the subgroups $M$ of order $p$ in $H'$ but not in $H^p$, $H/M$ is a special Miller $p$-group. Consider the following $p$-groups $H_1$ and $H_2$ of class $2$, for which we take same set of generators $\{a,b,c,d\}$ but they have the following relations respectively:
\begin{align*}
R_1: \hskip3mm & a^p=[a,c], \hskip3mm  b^p=[a,bcd], \hskip3mm  c^p=[b,cd], \hskip3mm  d^p=[b,d], \hskip3mm [c,d]=1
\end{align*}
and
\begin{align*}
R_2: \hskip3mm & a^p=[a,c], \hskip3mm b^p=[a,cd], \hskip3mm  c^p=[b,cd], \hskip3mm  d^p=[b,d].
\end{align*}
Note that $H_1$ and $H_2$ are special Miller $p$-groups of order $p^9$ and $p^{10}$ respectively.
Also note that $H_1$ and $H_2$, respectively, satisfy conditions {\bf (I)}-{\bf (II)} and {\bf (I)}-{\bf (III)} as given in \S \ref{section3}.
The following table describes our analysis, in which $M$ denotes a subgroup of order $p$ in $H_i'$ but not in $H_i^p$.
\begin{center}
\begin{tabular}{| c | c | c | c |}
\hline
 Group $H_i$ & Prime & Number of subgroups $M$          &$|\{ M\colon H_i/M$ is special Miller $\}|$ \\ \hline
                    & 3        &       81                                      &        76     \\ \cline{2-4}
 $H_1$         & 5        &   625                                        &    616         \\ \cline{2-4}
                    & 7        &             2401                             &    2388         \\ \hline
                    & 3        &        324                                    &         318    \\ \cline{2-4}
 $H_2$         & 5        &     3750                                      &      3740       \\ \cline{2-4}
                    & 7        &         19208                                &     19197        \\ \hline
\end{tabular}
\end{center}
We remark that the values in the above table have been computed using GAP \cite{gap} and Magma \cite{bosma}.

By the above analysis, the following question arises. 

\noindent{\bf Question:} {\it Let $H$ be a special Miller $p$-group satisfying one of the following conditions:

(1) $|H/H'|=|H^p|$ and $H^p<H'$.

(2) $|H/H'|=|H^p|$, $H^p<H'$ and $H'$ is freely generated by $\{ [x_i,x_j]\colon 1\leq i<j\leq n\}$ provided $\{x_1,\ldots,x_n\}$ is a minimal generating set for $H$.

Does there exist a subgroup $M$ of order $p$ in $H'$ but not in $H^p$ such that $H/M$ is  a special Miller $p$-group?
}

If this question has an affirmative answer, then we can always construct a non-special Miller $p$-group from a special Miller $p$-group $H$ satisfying condition (1) or (2) by Carant's methods, as illustrated in Theorems \ref{theoremC} and \ref{theoremD}.
%
%

%
%
\vskip2mm\noindent
\scriptsize
Rahul Dattatraya Kitture\\
Post-Doctoral Fellow,\\
School of Mathematics,\\ Harish-Chandra Research Institute,\\
 Chhatnag Road, Jhunsi, Allahabad - 211 019, INDIA.\\
 Email: {\tt rahul.kitture@gmail.com}

\vskip2mm\noindent
Manoj K. Yadav\\
School of Mathematics,\\
 Harish-Chandra Research Institute,\\
 Chhatnag Road, Jhunsi, Allahabad - 211 019, INDIA.\\ Email: {\tt myadav@hri.res.in}
\end{document}